   \newcommand{\mfa}{\mathfrak{a}}
   \newcommand{\mfb}{\mathfrak{b}}
   \newcommand{\mfc}{\mathfrak{c}}
   \newcommand{\mfd}{\mathfrak{d}}
   \newcommand{\lb}{label}
   \newcommand{\lm}{leftmargin}
\newtheorem{theorem}{Theorem}[section]
\newtheorem{lemma}[theorem]{Lemma}
\newtheorem{example}[theorem]{Example}
\renewcommand{\ge}{\geqslant}
\newcommand{\eps}{\varepsilon}
\DeclareMathOperator{\Id}{Id}
\DeclareMathOperator{\e}{e}
\DeclareMathOperator{\Lip}{Lip}
\newcommand{\dint}{\displaystyle\int}
\newcommand{\dr}{\, dr}
\newcommand{\prts}[1]{\left(#1\right)}
\newcommand{\prtsr}[1]{\left[#1\right]}
\newcommand{\abs}[1]{\left|#1\right|}
\newcommand{\norms}[1]{\left\|#1\right\|_s}
\newcommand{\set}[1]{\left\{#1\right\}}
\newcommand{\setm}[1]{\setminus\set{#1}}
   \newcommand{\N}{\ensuremath{\mathds N}}
   \newcommand{\R}{\ensuremath{\mathds R}}
\def\cB{\EuScript{B}}
\def\cV{\EuScript{V}}
\def\cX{\EuScript{X}}
\renewcommand{\ge}{\geqslant}
\renewcommand{\le}{\leqslant}
\begin{document}
%-----------------------------------------------------------------------------%
\title[Nonuniform dichotomic behavior: Lipschitz ...]%
   {Nonuniform dichotomic behavior: Lipschitz
   invariant manifolds for ODEs}
\author[Ant\'onio J. G. Bento]{Ant\'onio J. G. Bento}
\address{
   Ant\'onio J. G. Bento\\
   Departamento de Matem\'atica\\
   Universidade da Beira Interior\\
   6201-001 Covilh\~a\\
   Portugal}
\email{bento@ubi.pt}
\author{C\'esar M. Silva}
\address{
   C\'esar M. Silva\\
   Departamento de Matem\'atica\\
   Universidade da Beira Interior\\
   6201-001 Covilh\~a\\
   Portugal}
\email{csilva@ubi.pt}
\urladdr{www.mat.ubi.pt/~csilva}
\date{\today}
\subjclass[2010]{37D10, 34D09, 37D25}
%-----------------------------------------------------------------------------%
\begin{abstract}
   We obtain global and local theorems on the existence of invariant manifolds for perturbations of non autonomous linear differential equations assuming a very general form of dichotomic behavior for the linear equation. Besides some new situations that are far from the hyperbolic setting, our results include, and sometimes improve, some known stable manifold theorems.
\end{abstract}
%-----------------------------------------------------------------------------%
\maketitle
%-----------------------------------------------------------------------------%
\section{Introduction}
%-----------------------------------------------------------------------------%
The study of invariant manifolds is an important and classical subject in the theory of dynamical systems and can be traced back to the work of Poincar\'e~\cite{Poincare-1886}, who developed new techniques to the study of differential equations, emphasizing the need to study globally the solutions and to use geometric tools in the description of the solutions. The stable manifold theorem, a fundamental tool in the dynamical systems approach to differential equations, goes back to the work of Hadamard and Perron. These authors established the two most used methods to obtain stable manifolds: Hadamard~\cite{Hadamard-1901} obtained stable and unstable manifolds of fixed points of diffeomorphisms using the graph transformation method that consists in constructing the manifolds as graphs over the linearized stable and unstable subspaces, while the method of Perron~\cite{Perron-JRAM-1929,Perron-MZ-1929,Perron-MZ-1930}, established for hyperbolic equilibrium points, uses the integral equation formulation of the differential equation to obtain the invariant manifolds as fixed point of an operator obtained from the integral equations. In this work, our approach is close to the one by Perron.

Since the pioneering work of Hadamard and Perron, successive extensions have been added to the theory of invariant manifolds. In particular, the hyperbolicity condition was relaxed by Pesin~\cite{Pesin-IANSSSR-1976,Pesin-IANSSSR-1977,Pesin-UMN-1977} that considered the weaker notion of nonuniform hyperbolicity and obtained stable and unstable manifolds. Another versions of the stable manifold theorem were established by Ruelle~\cite{Ruelle-AM-1982} in Hilbert spaces and by Ma\~n\'e~\cite{Mane-LNM-1983} in Banach spaces under some compactness and invertibility assumptions.

In order to study nonlinear perturbations of linear nonautonomous differential equations it is frequently assumed the existence of an exponential dichotomy for the linear equation. This concept was introduced by Perron~\cite{Perron-MZ-1929,Perron-MZ-1930} in the late 1920s and has undergone successive modifications and generalizations in the last decades. Namely two major paths towards generalization can be identified: growth rates that are not necessarily exponential and dichotomies that are nonuniform in the sense that the bounds for the growth depend both on the time elapsed and on the initial time. Uniform dichotomies with nonexponential growth rates were considered in the work of Pinto~\cite{Pinto-DEDS-1994} and Naulin and Pinto~\cite{Naulin-Pinto-NATMA-1994} where the authors study stability of ordinary differential linear equations possessing $(h,k)$-dichotomies. More recently, nonexponential growth rates, expressed as generalized exponential functions, can be found in the work of P\"otzsche (see~\cite{Potzsche-LNM-2010}). On the other hand, nonuniform exponential dichotomies can be found in Preda and Megan~\cite{Preda-Megan-BAusMS-1983}, Megan, Sasu and Sasu~\cite{Megan-Sasu-Sasu-IEOT-2002} and, in a different form, in the work of Barreira and Valls that obtained a large set of results for difference and differential equations assuming the existence of a nonuniform exponential dichotomy  (see~\cite{Barreira-Valls-LNM-2008}).

Naturally, one can relax the notion of dichotomy allowing nonexponential growth as well as nonuniform behavior. This approach was followed by the present authors that in~\cite{Bento-Silva-JFA-2009} obtained stable manifolds for nonautonomous nonlinear perturbations of nonautonomous linear difference equations, assuming the existence of a nonuniform polynomial dichotomy for the linear equation. In the context of differential equations, in~\cite{Barreira-Valls-NATMA-2009} it was shown by Barreira and Valls that the existence of a different type of nonuniform polynomial dichotomy follows from the nonvanishing of a certain polynomial Lyapunov exponents. Invariant manifolds for differential equations assuming the existence of these polynomial dichotomies were obtained by the present authors in~\cite{Bento-Silva-QJM-2012}. Other works considered general nonuniform behaviors for the dichotomy. Namely, in~\cite{Barreira-Valls-NATMA-2010-72-(2444-2456)} Barreira and Valls obtained global stable manifolds for perturbations of linear equations assuming that the linear equation admits a so-called $\rho$-dichotomy and in~\cite{Bento-Silva-JDDE} the present authors established the existence of local stable manifolds assuming that the linear equation admits the more general notion of $(\mu,\nu)$-dichotomy.

The several notions of dichotomy and the existence in the literature of related results obtained for each specific notion of dichotomy led us to  define a general framework that includes as particular cases the several definitions of nonuniform dichotomy and that still allows us to obtain general results. Namely, we will consider general dichotomic behavior that consists simply in assuming the existence of a splitting into invariant subspaces where the norms of the evolution map are bounded by some general functions that depend on the initial and final times.

In this paper we establish the existence of Lipschitz invariant manifolds for perturbations of non autonomous linear differential equations with the mentioned dichotomic behavior, obtaining an asymptotic behavior along the manifolds that is the same as the one assumed for the linear part in the corresponding subspaces.

Our approach has some advantages. Firstly, we are able to consider growth rates given by some arbitrary function and this makes our results independent of particular properties of functions such as the exponential functions or the polynomial functions (for instance we can consider non monotonic growth). Secondly, the assumptions in our theorems are given by relations between the growth rates in the dichotomy and Lipschitz constants associated to the perturbations and this allows us to determine easily if our theorem holds for some set of bounds given the Lipschitz constants of the perturbations or to find perturbations with small enough Lipschitz constants for the theorem to hold with prescribed growth rates. Finally, unlike some previous related results we require only invertibility of the linear evolution operator in some subspaces, which may allow us to apply our results to compact operators defined in infinite dimensional Banach spaces.

Another interesting aspect of our work is that we obtain a result on the existence of global invariant manifolds and use it to derive a corresponding result on the existence of local invariant manifolds. This procedure reveals
a link between the Lipschitz constants and the size of the manifolds and unifies the settings considered in previous works for local and global results.

A discrete time counterpart of our results can be found in~\cite{Bento-Silva-2012-arXiv1209.6589B}.
%-----------------------------------------------------------------------------%
\section{Notation and preliminaries}
%-----------------------------------------------------------------------------%
Let $(\R_0^+)^2_\ge=\set{\prts{t,s} \in (\R_0^+)^2 \colon t \ge s}$ and $(\R_0^+)^2_> = \set{(t,s) \in (\R_0^+)^2 \colon t > s}$. Let $X$ be a Banach space and consider a continuous map $A:\R_0^+ \to B(X)$, where $B(X)$ denotes the set of bounded linear operators in $X$. Consider also the linear initial value problem
\begin{equation} \label{eq:lin:dif}
   v' = A(t) v, \ v(s)=v_s
\end{equation}
and assume that the solutions of~\eqref{eq:lin:dif} are global in the future. For each $(t,s) \in (\R_0^+)^2_\ge$, denote by $T_{t,s}$ the linear evolution operator associated to equation~\eqref{eq:lin:dif}.

We say that equation~\eqref{eq:lin:dif} admits an \emph{invariant splitting} if there exist bounded projections $P_s$, $s \in \R_0^+$, such that, for every $(t,s) \in (\R_0^+)^2_\ge$ we have
\begin{enumerate}[\lb=$($S$\arabic*)$,\lm=13mm]
   \item $T_{t,s}P_s=P_tT_{t,s}$;
   \item $T_{t,s}(\ker P_s) = \ker P_t$;
   \item $T_{t,s}|_{\ker P_s}: \ker P_s \to \ker P_t$ is invertible with bounded inverse.
\end{enumerate}
We define, for each $t \in \R_0^+$, the complementary projection $Q_t = \Id - P_t$ and the linear subspaces $E_t=P_t(X)$ and $F_t= \ker P_t = Q_t(X)$. As usual, we identify the vector spaces $E_t \times F_t$ and $E_t \oplus F_t$ as the same vector space.

Given functions $a,b:(\R_0^+)^2_\ge \to \R^+$, we say that equation~\eqref{eq:lin:dif} admits a \emph{general dichotomy with bounds $a$ and $b$} if it admits an invariant splitting such that
\begin{enumerate}[\lb=$($D$\arabic*)$,\lm=13mm]
   \item $\|T_{t,s}P_s\| \le a(t,s)$; \label{eq:splitbounds1}
   \item $\|(T_{t,s}|_{F_s})^{-1} Q_t\| \le b(t,s)$ where $Q_t=\Id-P_t$. \label{eq:splitbounds2}
\end{enumerate}
%-----------------------------------------------------------------------------%
\section{Existence of Lipschitz manifolds}
%-----------------------------------------------------------------------------%
In this section we are going to state our results on the existence of Lipschitz invariant manifolds of the initial value problem
\begin{equation}\label{eq:ivp-nonli}
	v' = A(t)v + f(t,v), \ v(s)=v_s
\end{equation}
where $f: \R_0^+ \times X \to X$ is a continuous function such that
\begin{equation}\label{cond-f-0}
   f(t,0)=0 \text{ for every $t \in \R_0^+$}
\end{equation}
and, for each $t \in \R_0^+$, the function $f_t:X \to X$ given by $f_t(x)=f(t,x)$ is a Lipschitz function. Denoting by $\Lip(f_t)$ the Lipschitz constant of $f_t$, it is clear that
\begin{equation}\label{cond-f-1}
   \|f(t,x) - f(t,y)\| \le \Lip(f_t) \|x-y\|
\end{equation}
for every $t \in \R_0^+$ and every $x,y \in X$ and making $y =0$ in~\eqref{cond-f-1}, by~\eqref{cond-f-0}, we have
\begin{equation}\label{cond-f-2}
   \|f(t,x) \| \le \Lip(f_t) \|x\|
\end{equation}
for every $t \in \R_0^+$ and every $x \in X$.

Note that condition~\eqref{cond-f-0} implies that $v(t) \equiv 0$ is the solution of~\eqref{eq:ivp-nonli} with $v_s =0$. This is not a serious restriction because if $v_0(t)$ is a nonzero solution of
   $$ v' = A(t) v + f(t,v),$$
then the change of variables $(t,u)=(t,v-v_0(t))$ transforms the previous problem in the problem
   $$ v' = A(t) v + g(t,v)$$
where $g(t,v) = f(t,v_0(t)+v) - f(t,v_0(t))$ and it is straighforward to see that, for every $t \in \R_0^+$, $g(t,\cdot)$ is a Lipschitz function with Lipschitz constant $\Lip(f_t)$ and $g(t,0)=0$.

Writing the unique solution of \eqref{eq:ivp-nonli} in the form
   $$ (x(t,s,v_s),y(t,s,v_s)) \in E_t \times F_t$$
where $v_s = (\xi, \eta) \in E_s \times F_s$, problem~\eqref{eq:ivp-nonli} is equivalent to the following problem
\begin{align}
   & x(t) = T_{t,s} \xi + \int_s^t T_{t,r} P_r f(r,x(r),y(r)) \dr,
      \label{eq:split-1a}\\
   & y(t) = T_{t,s} \eta + \int_s^t T_{t,r} Q_r f(r,x(r),y(r)) \dr.
   \label{eq:split-1b}
\end{align}

For each $\tau \ge 0$ we define the semiflow
\begin{equation} \label{def:Psi}
   \Psi_\tau(s,v_s) = \prts{s+\tau, x(s+\tau,s,v_s), y(s+\tau, s, v_s)}.
\end{equation}

Define
   $$ G = \bigcup_{t \ge 0} \set{t} \times E_t$$
and denote by $\cX$ the space of functions $\phi:G \to X$ such that
\begin{align}
   & \phi(t,0)=0, \label{cond-phi-0} \\
   & \phi(t,\xi) \in F_t \label{cond-phi-0a}\\
   & \| \phi(t,\xi) - \phi(t,\bar\xi) \| \le \| \xi - \bar\xi\|,
   \label{cond-phi-1}
\end{align}
for every $(t,\xi), (t,\bar\xi) \in G$. Note that making $\bar\xi = 0$ in~\eqref{cond-phi-1} we have
\begin{equation} \label{cond-phi-1a}
   \|\phi(t,\xi)\| \le \|\xi\|
\end{equation}
for every $(t,\xi) \in G$.

For every $\phi \in \cX$ we define the graph
\begin{equation} \label{def:V_phi}
   \cV_\phi
   = \set{\prts{s,\xi, \phi(s,\xi)} \colon (s,\xi) \in G},
\end{equation}
that we call \emph{global Lipschitz invariant manifold}.

We now state the result on the existence of global invariant manifolds.

\begin{theorem} \label{thm:global}
   Given a Banach space $X$, suppose that equation~\eqref{eq:lin:dif} admits a general dichotomy with bounds $a,b:(\R_0^+)^2_\ge \to \R^+$. Let $f: \R_0^+ \times X \to X$ be a continuous function such that $f_t$ is Lipschitz for each $t \in \R_0^+$ and~\eqref{cond-f-0} holds. Assume that
   \begin{equation} \label{eq:CondicaoTeo}
      \lim_{t \to +\infty} a(t,s) b(t,s) = 0
   \end{equation}
   for every $s \in \R_0^+$,
   \begin{equation}\label{eq:tau}
      \alpha =
      \sup_{(t,s) \in (\R_0^+)^2_>} \
      \dfrac{1}{a(t,s)} \dint_s^t a(t,r) a(r,s) \Lip(f_r) \dr
      < + \infty
   \end{equation}
   and
   \begin{equation}\label{eq:lbd}
      \beta =
      \sup_{s \in \R_0^+} \dint_s^{+\infty} b(r,s) a(r,s) \Lip(f_r) \dr
      < +\infty.
   \end{equation}
   If
   \begin{equation}\label{ine:alpha_beta}
      2 \alpha + \max\set{2 \beta, \sqrt{\beta}} < 1,
   \end{equation}
   then there is a unique $\phi \in \cX$ such that
   \begin{equation} \label{eq:invariance}
      \Psi_\tau(\cV_\phi) \subseteq \cV_\phi
   \end{equation}
   for every $\tau \ge 0$, where $\Psi_\tau$ is given by~\eqref{def:Psi} and
   $\cV_\phi$ is given by~\eqref{def:V_phi}. Moreover,
   \begin{equation} \label{eq:bound-thm1}
      \| \Psi_{t-s}(s,\xi,\phi(s,\xi))- \Psi_{t-s}(s,\bar\xi,\phi(s,\bar\xi))\|
      \le \frac{2}{1-2\alpha} a(t,s) \|\xi - \bar\xi\|
   \end{equation}
   for every $(t,s) \in (\R_0^+)^2_\ge$ and every $\xi, \bar \xi \in E_s$.
\end{theorem}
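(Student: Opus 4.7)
The plan is to realise $\phi$ as the fixed point of a Perron operator on $\cX$, built from the integral equations \eqref{eq:split-1a}--\eqref{eq:split-1b} together with the invariance constraint $y(r) = \phi(r, x(r))$. First, for fixed $\phi \in \cX$ and $(s,\xi) \in G$, I would solve
$$x(t) = T_{t,s}\xi + \dint_s^t T_{t,r} P_r\, f(r, x(r), \phi(r, x(r)))\, dr$$
in the complete metric space of continuous $x \colon [s,+\infty) \to X$ with $x(t) \in E_t$ and $\|x(t)\| \le K a(t,s)\|\xi\|$, endowed with the weighted distance $d(x,\bar x) = \sup_{t \ge s}\|x(t)-\bar x(t)\|/a(t,s)$. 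The Lipschitz bound on $f$ combined with \eqref{cond-phi-1} gives $\|f(r,x,\phi(r,x))-f(r,\bar x,\phi(r,\bar x))\| \le 2\Lip(f_r)\|x-\bar x\|$, and hypothesis \eqref{eq:tau} makes the right-hand side a contraction of ratio $2\alpha < 1$. This produces a unique $x_\phi(\,\cdot\,;s,\xi)$, satisfying $\|x_\phi(t;s,\xi)\| \le a(t,s)\|\xi\|/(1-2\alpha)$ and $\|x_\phi(t;s,\xi) - x_\phi(t;s,\bar\xi)\| \le a(t,s)\|\xi-\bar\xi\|/(1-2\alpha)$.

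Next, solving \eqref{eq:split-1b} for $\eta$ by inverting $T_{t,s}|_{F_s}$ and letting $t\to +\infty$, using \eqref{eq:CondicaoTeo} to kill the boundary term (which is of order $b(t,s)a(t,s)\|\xi\|$), I obtain the Perron identity
$$\phi(s,\xi) = -\dint_s^{+\infty} (T_{r,s}|_{F_s})^{-1} Q_r\, f\bigl(r, x_\phi(r;s,\xi), \phi(r, x_\phi(r;s,\xi))\bigr)\, dr,$$
which suggests defining the operator $\Phi \colon \cX \to \cX$ by the right-hand side. The bounds \eqref{eq:splitbounds2}, \eqref{cond-f-2}, \eqref{eq:lbd}, and the estimates of the previous paragraph imply convergence of the integral, $(\Phi\phi)(s,0) = 0$, $(\Phi\phi)(s,\xi) \in F_s$, and the Lipschitz bound $\Lip((\Phi\phi)(s,\cdot)) \le 2\beta/(1-2\alpha)$, which is $\le 1$ thanks to the $2\alpha + 2\beta < 1$ part of \eqref{ine:alpha_beta}. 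Thus $\Phi$ indeed maps $\cX$ into itself.

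To show that $\Phi$ is a contraction on $\cX$ under the metric $d(\phi,\bar\phi) = \sup_{(s,\xi),\, \xi \neq 0}\|\phi(s,\xi)-\bar\phi(s,\xi)\|/\|\xi\|$, I would first estimate $\|x_\phi(r;s,\xi) - x_{\bar\phi}(r;s,\xi)\|$ by subtracting the defining integral equations and using the splitting $\|\phi(r,x_\phi) - \bar\phi(r,x_{\bar\phi})\| \le \|x_\phi-x_{\bar\phi}\| + d(\phi,\bar\phi)\|x_{\bar\phi}\|$ together with \eqref{eq:tau}; this gives a Gronwall-type bound of the form $\alpha a(r,s)\|\xi\| d(\phi,\bar\phi)/(1-2\alpha)^2$. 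Plugging this back into the defining integral for $\Phi\phi - \Phi\bar\phi$ and using \eqref{eq:lbd}, the contraction constant telescopes to $\beta/(1-2\alpha)^2$, which is less than $1$ precisely when $\sqrt\beta < 1 - 2\alpha$, the $\sqrt\beta$ part of \eqref{ine:alpha_beta}. Banach's theorem then delivers the unique $\phi$. Invariance \eqref{eq:invariance} is automatic because, by construction, $(x_\phi(t;s,\xi), \phi(t, x_\phi(t;s,\xi)))$ solves \eqref{eq:ivp-nonli} with initial datum $(\xi, \phi(s,\xi))$, and \eqref{eq:bound-thm1} follows from the Lipschitz estimate for $x_\phi$ doubled by $\Lip(\phi(t,\cdot)) \le 1$. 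The delicate step is the contraction: without carefully tracking how the $\phi$-perturbation propagates through $x_\phi$, one picks up an extra $1/(1-2\alpha)$ factor, and it is exactly this squared denominator that explains the presence of $\sqrt\beta$ rather than $\beta$ in \eqref{ine:alpha_beta}.
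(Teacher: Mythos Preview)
Your proposal is correct and follows essentially the same route as the paper's proof: first a contraction on a weighted space of $E$-trajectories (ratio $2\alpha$) to produce $x^\phi$ with the bounds $\|x^\phi(t,\xi)\|\le a(t,s)\|\xi\|/(1-2\alpha)$ and $\|x^\phi-x^\psi\|\le \alpha\,d(\phi,\psi)/(1-2\alpha)^2$; then the Perron identity via $(T_{t,s}|_{F_s})^{-1}$ and \eqref{eq:CondicaoTeo}; then a contraction on $\cX$ for $\Phi$ with constant $\beta/(1-2\alpha)^2$, using $2\alpha+2\beta<1$ for $\Phi(\cX)\subset\cX$ and $2\alpha+\sqrt\beta<1$ for the contraction. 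The only cosmetic difference is that the paper packages the $x$-fixed-point in a space $\cB_s$ of functions of both $t$ and $\xi$, whereas you fix $\xi$ first; the resulting constants and the final argument for \eqref{eq:bound-thm1} are identical.
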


The next theorem is a local version of Theorem~\ref{thm:global}. Let $B(r)$ denote the open ball of radius $r$ in $X$. Given a function $R:\R_0^+ \to \R^+$ define
\begin{equation}\label{eq:manifold-local}
   \cV^*_{\phi,R}
   =\set{\prts{s,\xi,\phi(s,\xi)} \in \cV_\phi \colon
      (s,\xi) \in \{s\} \times  B(R(s)) }.
\end{equation}
We have the following result.

\begin{theorem} \label{thm:local}
   Given a Banach space $X$, suppose that equation~\eqref{eq:lin:dif} admits a splitting with bounds $a$ and $b$. Let $f \colon \R_0^+ \times X \to X$ be a continuous function satisfying \eqref{cond-f-0} and such that, for each $t \in \R_0^+$, $f_t : X \to X$ is a Lipschitz function in $B(R(t))$, where $R \colon \R_0^+ \to \R^+$. Assume that
   \begin{equation} \label{eq:CondicaoTeo-local}
      \lim_{t \to +\infty} a(t,s) b(t,s) = 0
   \end{equation}
   for every $s \in \R_0^+$,
   \begin{equation}\label{eq:tau-local}
      \alpha =
      \sup_{(t,s) \in (\R_0^+)^2_>}
      \dfrac{1}{a(t,s)} \int_s^t a(t,r) a(r,s)
      \Lip(f_r|_{B(R(r))}) \dr < + \infty
   \end{equation}
   and
   \begin{equation}\label{eq:lbd-local}
      \beta =
      \sup_{s \in \R_0^+} \dint_s^{+\infty} b(r,s) a(r,s)
      \Lip(f_r|_{B(R(r))}) \dr < + \infty.
   \end{equation}
   If, for each $s \in \R_0^+$,
   \begin{equation}\label{ine:sn-local}
      S(s)= \max\left\{ 1, \, \frac{2}{1-4\alpha} \, \sup_{t \ge s} \frac{a(t,s) R(s)}{R(t)} \right\}< +\infty
   \end{equation}
   and
   \begin{equation}\label{ine:alpha_beta-local}
      4 \alpha + \max\set{4 \beta, \sqrt{2\beta}} < 1,
   \end{equation}
   then there is $\phi \in \cX$ such that
   \begin{equation} \label{thm:local:invar-local}
      \Psi_\tau(\cV^*_{\phi,R/(2S)}) \subseteq \cV^*_{\phi,R}
   \end{equation}
   for every $\tau \ge 0$, where $\Psi_\tau$ is given by~\eqref{def:Psi}, $R/(2S)$ denotes the function given by $s \mapsto R(s)/(2S(s))$ and $\cV^*_{\phi,R/(2S)}$ and $\cV^*_{\phi,R}$ are given by~\eqref{eq:manifold-local}. Furthermore, we have
   \begin{equation}\label{thm:ineq:norm:F_mn(xi...)-F_mn(barxi...)-local}
      \|\Psi_{t-s}(\xi,\phi(s,\xi)) -\Psi_{t-s}(\bar \xi,\phi(s,\bar \xi)) \|
      \le \dfrac{2}{1-4\alpha} \, a(t,s)\, \|\xi-\bar \xi\|.
   \end{equation}
   for every $\prts{t,s} \in (\R_0^+)^2_\ge$ and every $\xi, \bar\xi \in B(R(s))$.
\end{theorem}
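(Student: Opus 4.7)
The plan is to derive Theorem~\ref{thm:local} from Theorem~\ref{thm:global} by extending $f$ globally via a radial truncation and then showing that orbits issuing from the smaller ball $B(R(s)/(2S(s)))$ remain inside the region where the extension agrees with $f$. Concretely, I define the radial retraction $\rho_t \colon X \to \overline{B(R(t))}$ by $\rho_t(v) = v$ for $\|v\| \le R(t)$ and $\rho_t(v) = R(t)\, v/\|v\|$ otherwise; on an arbitrary Banach space this retraction is $2$-Lipschitz. Setting $\tilde f(t, v) = f(t, \rho_t(v))$ then yields a continuous function that agrees with $f$ on $B(R(t))$, satisfies $\tilde f(t, 0) = 0$, and for which each $\tilde f_t$ is globally Lipschitz on $X$ with $\Lip(\tilde f_t) \le 2 \Lip(f_t|_{B(R(t))})$.

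Next I would verify the hypotheses of Theorem~\ref{thm:global} for $\tilde f$. Writing $\tilde \alpha$ and $\tilde \beta$ for the constants defined by \eqref{eq:tau} and \eqref{eq:lbd} with $\tilde f$ in place of $f$, the Lipschitz bound above gives $\tilde \alpha \le 2\alpha$ and $\tilde \beta \le 2\beta$, and hypothesis \eqref{ine:alpha_beta-local} yields
\[
   2\tilde \alpha + \max\{2\tilde \beta, \sqrt{\tilde \beta}\} \le 4\alpha + \max\{4\beta, \sqrt{2\beta}\} < 1,
\]
which is exactly \eqref{ine:alpha_beta}. Theorem~\ref{thm:global} thus produces $\phi \in \cX$ such that the semiflow $\tilde \Psi_\tau$ associated with $\tilde f$ satisfies $\tilde \Psi_\tau(\cV_\phi) \subseteq \cV_\phi$, together with the Lipschitz estimate
\[
   \|\tilde \Psi_{t-s}(s,\xi,\phi(s,\xi)) - \tilde \Psi_{t-s}(s,\bar\xi,\phi(s,\bar\xi))\| \le \frac{2}{1-2\tilde \alpha}\, a(t,s)\, \|\xi - \bar\xi\| \le \frac{2}{1-4\alpha}\, a(t,s)\, \|\xi - \bar\xi\|.
\]

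The central step is to show that whenever $\xi \in B(R(s)/(2S(s)))$ the orbit of $\tilde \Psi$ never leaves the region where $\tilde f = f$, so that it coincides with the orbit of the original semiflow $\Psi$. Because $\phi(s,0) = 0$ and $\tilde f(\cdot,0) = 0$, the zero orbit is preserved, and taking $\bar\xi = 0$ in the estimate above together with $\|\xi\| < R(s)/(2S(s))$ yields
\[
   \|\tilde \Psi_{t-s}(s,\xi,\phi(s,\xi))\| \le \frac{2\, a(t,s)\, \|\xi\|}{1-4\alpha} \le \frac{a(t,s)\, R(s)}{(1-4\alpha)\, S(s)}.
\]
The defining inequality of $S(s)$ in \eqref{ine:sn-local} precisely ensures $\dfrac{a(t,s)\, R(s)}{(1-4\alpha)\, S(s)} \le \dfrac{R(t)}{2}$, so the orbit satisfies $\|(x(t), y(t))\| \le R(t)/2 < R(t)$ for every $t \ge s$. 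Hence it remains in $B(R(t))$, where $\tilde f = f$, so $\tilde \Psi_{t-s} = \Psi_{t-s}$ along this orbit; in particular $x(t) \in B(R(t))$, yielding \eqref{thm:local:invar-local}, while restricting the Lipschitz estimate above to $\xi, \bar \xi \in B(R(s))$ gives \eqref{thm:ineq:norm:F_mn(xi...)-F_mn(barxi...)-local}.

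The main obstacle is the constant bookkeeping: one has to verify that the factor $2$ picked up by the radial retraction accounts exactly for the passage from $2\alpha, 2\beta, \sqrt{\beta}$ in \eqref{ine:alpha_beta} to $4\alpha, 4\beta, \sqrt{2\beta}$ in \eqref{ine:alpha_beta-local}, and that the factor $2$ inside $R/(2S)$ is exactly the one that produces the estimate $\|(x(t),y(t))\| \le R(t)/2$ certifying the orbit stays in the truncation region. Beyond this constant tracking and a clarification of the ambient norm on $E_t \oplus F_t$ (so that $\|x(t)\| \le \|(x(t),y(t))\|$), the argument is a direct invocation of Theorem~\ref{thm:global}.
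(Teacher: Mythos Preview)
Your proposal is correct and follows essentially the same route as the paper: define the radial truncation $\tilde f(t,v)=f(t,\rho_t(v))$, use $\Lip(\tilde f_t)\le 2\Lip(f_t|_{B(R(t))})$ to get $\tilde\alpha\le 2\alpha$, $\tilde\beta\le 2\beta$, verify \eqref{ine:alpha_beta} for $\tilde f$ from \eqref{ine:alpha_beta-local}, apply Theorem~\ref{thm:global}, and then use the resulting estimate with $\bar\xi=0$ together with \eqref{ine:sn-local} to trap the orbit in $B(R(t))$. Your bound $\|(x(t),y(t))\|\le R(t)/2$ is in fact slightly sharper than the paper's $<R(t)$ (you track the extra factor $2$ in $R/(2S)$ more carefully), but the argument and conclusion are the same.
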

%-----------------------------------------------------------------------------%
\section{Examples}
%-----------------------------------------------------------------------------%
In this section we will give examples that illustrate our results and show that our general setting contains several known results in the literature. Firstly, we will consider bounds $a,b$ of the form
\begin{equation}\label{cond:mfa...1}
   a(t,s)= \dfrac{\mfa(s)}{\mfa(t)} \mfc(s)
   \quad \text{ and } \quad
   b(t,s)=\dfrac{\mfb(s)}{\mfb(t)} \mfd(t)
\end{equation}
where $\mfa, \mfb, \mfc, \mfd:\R_0^+ \to \R^+$ are some positive functions and
\begin{equation}\label{cond:mfa...2}
   \mfc(t) , \mfd(t) \ge 1 \text{ for all } t \in \R^+_0.
\end{equation}

Our fist example shows that there is always a differential equation that has a generalized dichotomy with bounds $a$ and $b$ of the form~\eqref{cond:mfa...1} with $\mfa,\mfb,\mfc,\mfd$ differentiable and satisfying~\eqref{cond:mfa...2}.

\begin{example}
   Let $\mfa,\mfb,\mfc,\mfd \colon \R^+_0 \to \R^+$ be positive differentiable functions satisfying~\eqref{cond:mfa...2}. The differential equation in $\R^2$ given by
   \begin{equation} \label{eq:example}
      \begin{cases}
         u' = \prts{- \dfrac{\mfa'(t)}{\mfa(t)}
            + \dfrac{\mfc'(t)}{\mfc(t)} \, \dfrac{\cos t -1}{2}
            - \log \mfc(t) \dfrac{\sin t}{2}} u\\[3mm]
         v' = \prts{\dfrac{\mfb'(t)}{\mfb(t)}
            + \dfrac{\mfd'(t)}{\mfd(t)} \, \dfrac{\cos t -1}{2}
            - \log \mfd(t) \dfrac{\sin t}{2}} v
      \end{cases}
   \end{equation}
   has the following evolution operator
      $$ T(t,s)(u,v) = (U(t,s)u, V(t,s)v),$$
   where
   \begin{align*}
      & U(t,s)
         = \dfrac{\mfa(s)}{\mfa(t)}
            \dfrac{\mfc(t)^{(\cos t-1)/2}}{\mfc(s)^{(\cos s-1)/2}},\\
      & V(t,s) = \dfrac{\mfb(t)}{\mfb(s)}
            \dfrac{\mfd(t)^{(\cos t-1)/2}}{\mfd(s)^{(\cos s-1)/2}}.
   \end{align*}
   Using the projections $P(t) \colon \R^2 \to \R^2$ defined by $P(t)(u,v)
   =(u,0)$ we have
   \begin{align*}
      & \|T_{t,s}P_s\|
         = \abs{U(t,s)}
         \le \dfrac{\mfa(s)}{\mfa(t)} \mfc(s)\\
      & \|\prts{T_{t,s}|_{F_s}}^{-1} Q_t\|
         = \abs{V(t,s)^{-1}}
         \le \dfrac{\mfb(s)}{\mfb(t)} \mfd(t)
   \end{align*}
   and thus~\eqref{eq:example} admits a general dichotomy with bounds $a$ and $b$ of the form~\eqref{cond:mfa...1}. Moreover,
   if $t = 2 k \pi$ and $s = (2k -1) \pi$, $k \in \N$, then
      $$ \|T_{t,s}P_s\| = \dfrac{\mfa(s)}{\mfa(t)} \mfc(s)$$
   and if $t = (2 k-1) \pi$ and $s = 2k \pi$, $k \in \N$, then
      $$ \|\prts{T_{t,s}|_{F_s}}^{-1} Q_t\| = \dfrac{\mfb(s)}{\mfb(t)} \mfd(t).$$
\end{example}

In the next example we show that we can obtain invariant manifolds for generalized dichotomies with bounds of the form~\eqref{cond:mfa...1} satisfying~\eqref{cond:mfa...2}.

\begin{example}
   Let $a$ and $b$ be bounds of the form~\eqref{cond:mfa...1} satisfying~\eqref{cond:mfa...2}. In this particular case, conditions~\eqref{eq:CondicaoTeo}, \eqref{eq:tau} and \eqref{eq:lbd} correspond respectively to the conditions
	\begin{equation} \label{eq:condddd}
	  \lim_{t \to +\infty} \dfrac{\mfd(t)}{\mfa(t) \mfb(t)} = 0,
	\end{equation}
	  $$\alpha=\dint_0^{+\infty} \mfc(r) \Lip(f_r) \dr < +\infty$$
   and
    	$$\beta=\sup_{s \in \R_0^+} \mfa(s) \mfb(s) \mfc(s) \dint_s^{+\infty}
 	\dfrac{\mfd(r)}{\mfa(r) \mfb(r)} \Lip(f_r) \dr < +\infty.$$
   Thus, if the numbers $\Lip(f_r)$ are small enough so the last three conditions and~\eqref{ine:alpha_beta} hold, we obtain an invariant manifold $\cV_\phi$ given by~\eqref{def:V_phi} where
      $$ \| \Psi_{t-s}(s,\xi,\phi(s,\xi))- \Psi_{t-s}(s,\bar\xi,\phi(s,\bar\xi))\| \le \frac{2}{1-2\alpha} \dfrac{\mfa(s)}{\mfa(t)} \mfc(s) \|\xi - \bar\xi\|, $$
   for every $\prts{t,s} \in (\R_0^+)_\ge^2$ and every $\xi, \bar\xi \in E_s$.

   It is easy to see that if $\mfa, \mfb, \mfc$ are non decreasing and
      $$ \Lip(f_r) \le \frac{\lambda(r)}{\mfc(r)\mfd(r)} \quad \text{with} \quad \dint_0^{+\infty} \lambda(r) \dr < \frac{1}{4},$$
   conditions~\eqref{eq:tau}, \eqref{eq:lbd} and~\eqref{ine:alpha_beta} are verified and thus, provided that~\eqref{eq:condddd} holds, we always have perturbations with small enough non-zero Lipschitz constants such that the perturbed equations have invariant manifolds with the behavior given in our theorem.

   In particular, setting
      $$ \mfa(r)=\e^{-ar}, \ \ \
         \mfb(r)=\e^{br} \ \ \ \text{ and } \ \ \
         \mfc(r)=\mfd(r)=D \e^{\eps r},$$
   for some constants $D \ge 1$, $a< 0 \le b$ and $\eps>0$, we get
   \begin{equation}\label{a(t,s)=exp...}
      a(t,s)=D\e^{a(t-s)+\eps s}
      \quad \text{ and } \quad
      b(t,s)=D\e^{b(t-s)+\eps t},
   \end{equation}
   and if
      $$ \Lip(f_r) \le \delta \e^{-2\eps r},$$
   choosing $\delta > 0$ small enough, we obtain a Lipschitz version of Theorem 4 in~\cite{Barreira-Valls-DCDS-A-2008-21-4}. Note that, for these dichotomies, condition~\eqref{eq:condddd} is equivalent to condition $a+\eps<b$, already present in the referred paper.

   Another particular case can be obtained by setting
      $$ \mfa(r)=(r+1)^{-a}, \ \ \
         \mfb(r)=(r+1)^b \ \ \ \text{ and } \ \ \
         \mfc(r)=\mfd(r)=D(r+1)^\eps,$$
   for some constants $D \ge 1$, $a<0 \le b$ and $\eps>0$, yelding
      $$ a(t,s)=D\left( \dfrac{t+1}{s+1}\right)^a (s+1)^\eps
         \quad \text{ and } \quad
         b(t,s)=D \left( \dfrac{t+1}{s+1}\right)^{-b} (t+1)^\eps.$$
   Since condition~\eqref{eq:condddd} is also equivalent to $a+\eps<b$, assuming this condition and setting
      $$ \Lip(f_r) \le \delta (r+1)^{-2\eps-1}$$
   with $\delta > 0$ sufficiently small, we obtain a Lipschitz version of Theorem 3.1 in~\cite{Bento-Silva-QJM-2012}.

   Setting
      $$ \mfa(r)=\e^{-a \rho(r)}, \ \ \
         \mfb(r)=\e^{b \rho(r)}\ \ \ \text{ and } \ \ \
         \mfc(r)=\mfd(r)=D \e^{\eps \rho(r)},$$
   and
      $$ \Lip(f_r) \le \delta \rho'(r) \e^{-2\eps \rho(r)},$$
   where $D \ge 1$, $a<0 \le b$ and $\eps>0$, $\rho \colon \R_0^+ \to \R_0^+$ is a nondecreasing differentiable function and $\delta > 0$ sufficiently small, we obtain a Lipschitz version of Theorem 1 in~\cite{Barreira-Valls-NATMA-2010-72-(2444-2456)}.
\end{example}

In the next example we consider dichotomies with bounds that are not of the form~\eqref{cond:mfa...1}.

\begin{example}
   For each $(t,s) \in (\R_0^+)^2_\ge$, set
      $$ a(t,s) = D(t-s+1)^a (s+1)^\eps \quad \text{and} \quad
         b(t,s)=D(t-s+1)^{-b} (t+1)^\eps,$$
   for some constants $a<0\le b$ and $\eps>0$. Set also
      $$ \Lip(f_r)\le \delta (r+1)^{-2\eps-1}.$$
   In this case, conditions~\eqref{eq:tau} and \eqref{eq:lbd} are satisfied, condition~\eqref{eq:CondicaoTeo} corresponds to $a+\eps<b$ and condition~\eqref{ine:alpha_beta} is satisfied if we consider a small enough $\delta>0$. Thus, our theorem allows us to obtain an invariant stable manifold $\cV_\phi$ given by~\eqref{def:V_phi} where the decay is given by
      $$ \| \Psi_{t-s}(s,\xi,\phi(s,\xi))
            - \Psi_{t-s}(s,\bar\xi,\phi(s,\bar\xi))\|
         \le \frac{2D}{1-2\alpha} (t-s+1)^a (s+1)^\eps \|\xi - \bar\xi\|,$$
   for every $(t,s) \in (\R_0^+)^2_\ge$ and every $\xi, \bar\xi \in E_s$.
\end{example}

In the next example we will see that replacing in~\eqref{a(t,s)=exp...} the bound $b$ we obtain invariant stable manifolds for any constants $a < 0$ and $\eps > 0$.

\begin{example}
   For each $(t,s) \in (\R_0^+)^2_\ge$, set
      $$ a(t,s) = D \e^{a(t-s)+\eps s} \quad \text{and} \quad
         b(t,s)=D \prts{\frac{t+1}{s+1}}^{-b} (t+1)^\eps,$$
   for some constants $a<0 \le b$ and $\eps>0$. Set also
      $$ \Lip(f_r) \le \delta \e^{-2\eps r}.$$
   In this case, all conditions of our theorem are satisfied provided that we consider a small enough $\delta>0$. Thus, our theorem allows us to obtain an invariant manifold $\cV_\phi$ given by~\eqref{def:V_phi} where the decay is given by
      $$ \| \Psi_{t-s}(s,\xi,\phi(s,\xi))
            - \Psi_{t-s}(s,\bar\xi,\phi(s,\bar\xi))\|
         \le \frac{2D}{1-2\alpha} \e^{a(t-s)+\eps s}\|\xi - \bar\xi\|,$$
   for every $(t,s) \in (\R_0^+)^2_\ge$ and every $\xi, \bar\xi \in E_s$.
\end{example}

The next example shows that we can still obtain some information about the dynamics in situations that are far from being hyperbolic in any reasonable sense.

\begin{example}
   For each $(t,s) \in (\R_0^+)^2_\ge$, set
      $$ a(t,s) = L \quad \text{and} \quad
         b(t,s)=D \e^{a(t-s)+\eps t},$$
   for some constants $L \ge 1$, $a<0$ and $\eps>0$. Set also
      $$ \Lip(f_r) \le \delta \e^{-\eps r}.$$
   Once again all conditions of our theorem are satisfied provided that we consider a small enough $\delta>0$. Thus, our theorem allows us to obtain a sequence of invariant manifolds $\cV_\phi$ given by~\eqref{def:V_phi} where we have
      $$ \| \Psi_{t-s}(s,\xi,\phi(s,\xi))
            - \Psi_{t-s}(s,\bar\xi,\phi(s,\bar\xi))\|
         \le \frac{2L}{1-2\alpha} \|\xi - \bar\xi\|,$$
   for every $(t,s) \in (\R_0^+)^2_\ge$ and every $\xi, \bar\xi \in E_s$. That is, we obtain an upper bound for the distance of the iterates of any two points in the manifolds.
\end{example}

Next we use Theorem~\ref{thm:local} to obtain local versions of the previous examples.

\begin{example}
   Let $a$ and $b$ be bounds of the form~\eqref{cond:mfa...1} satisfying~\eqref{cond:mfa...2} and assume that, for each $r \in \R_0^+$ and every $u, v \in X$, we have
   	$$ \|f(r,u)-f(r,v)\| \le c \|u-v\| (\|u\|+\|v\|)^q,$$
   for some constants $c>0$ and $q > 0$. It is immediate that $f_r|_{B(R(r))}$ is Lipschitz with Lipschitz constant less or equal than $2^q c R(r)^q$.
   Thus, conditions~\eqref{eq:CondicaoTeo-local}, \eqref{eq:tau-local}, \eqref{eq:lbd-local} and~\eqref{ine:sn-local} correspond respectively to the conditions
   \begin{equation*}
   	\lim_{t \to +\infty} \dfrac{\mfd(t)}{\mfa(t) \mfb(t)} = 0,
   \end{equation*}
   \begin{equation*}
   	\alpha = 2^q c \dint_0^{+\infty} \mfc(r) R(r)^q \dr < +\infty,
   \end{equation*}
   \begin{equation*}
   	\beta = 2^q c \sup_{s \in \R_0^+} \mfa(s) \mfb(s) \mfc(s)
      \dint_s^{+\infty} \dfrac{\mfd(r) R(r)^q}{\mfa(r) \mfb(r)} \dr
      < +\infty
   \end{equation*}
   and
   \begin{equation*}
   	\mfa(s) \mfc(s) R(s) \sup_{t \ge s} \mfa(t)^{-1} R(t)^{-1} < +\infty.
   \end{equation*}
   Thus, if the radius of the balls $B(R(r))$ is small enough so that~\eqref{ine:alpha_beta-local} holds, we obtain an invariant manifold given by~\eqref{eq:manifold-local} where the decay is given by
   $$ \| \Psi_{t-s}(s,\xi,\phi(s,\xi))
         - \Psi_{t-s}(s,\bar\xi,\phi(s,\bar\xi))\|
      \le \frac{2}{1-2\alpha} \dfrac{\mfa(s)}{\mfa(t)} \mfc(s)
         \|\xi - \bar\xi\|,$$
   for every $(t,s) \in (\R_0^+)^2_\ge$ and every $\xi, \bar\xi \in B(R(s))$.

   Note that, if $\mfa$, $\mfb$ and $\mfc$ are non decreasing functions,
      $$\sup_{s \in \R_0^+} \mfc(s)
         \int_s^{+\infty} \dfrac{\mfd(r)}{\mfa(r)^q} \dr
         < +\infty$$
   and
      $$ \int_0^{+\infty} \dfrac{\mfc(r)}{\mfa(r)^q} \dr < +\infty,$$
   putting $R(r) = \delta/\mfa(r)$ and choosing $\delta > 0$ sufficiently small we are in the conditions of Theorem~\ref{thm:local} and therefore we obtain a local invariant manifold theorem.

   As a particular case, given $a < 0 \le b$ and $\eps \ge 0$, we can put
      $$ \mfa(t)= \e^{- a t}, \ \
         \mfb(t)= \e^{b t}, \ \ \text{and} \ \
         \mfc(t)= \mfd(t)=D \e^{\eps t},$$
   for each $t \in \R_0^+$. We can also set
      $$ R(r)=\delta \e^{-\beta r}$$
   for each $r \in \R_0^+$, where $\beta$ and $\delta$ are positive numbers. In this setting condition~\eqref{eq:CondicaoTeo-local} is equivalent to $a+\eps<b$, condition~\eqref{eq:tau-local} is equivalent to $\eps-\beta q<0$, condition~\eqref{eq:lbd-local} is equivalent to $2\eps-\beta q \le 0$ and condition~\eqref{ine:sn-local} is equivalent to $a+\beta \le 0$. In this setting, choosing $\delta>0$ sufficiently small, if $a + \eps < b$ and $a + 2\eps/q \le 0$, we obtain a local stable manifold for every positive number $\beta \in [2\eps/q,-a]$. This improves Theorem 3 in~\cite{Barreira-Valls-JDE-2006-221} (see also Theorem 4.1 in~\cite{Barreira-Valls-LNM-2008}) since in that paper $\beta = \eps(1+2/q)$.

   Another particular case is obtained putting
      $$ \mfa(t)= (t+1)^{- a}, \ \
         \mfb(t)= (t+1)^b, \ \ \text{and} \ \
         \mfc(t)= \mfd(t)=D (t+1)^\eps,$$
   and
      $$ R(r)= \delta (r+1)^{-\beta}$$
   where $a < 0 \le b$, $\eps > 0$ and $\beta > 0$. In these conditions,~\eqref{eq:CondicaoTeo-local} is equivalent to $a+\eps<b$, condition~\eqref{eq:tau-local} is equivalent to $\eps+1-\beta q <0$, condition~\eqref{eq:lbd-local} is equivalent to $2\eps + 1 -\beta q \le 0$ and condition~\eqref{ine:sn-local} is equivalent to $a+\beta \le 0$. Therefore if $a + \eps < b$ and $a + (2\eps + 1)/q \le 0$, choosing $\delta > 0$ sufficiently small, we obtain a local stable manifold theorem for every $\beta \in [(2\eps+1)/q,-a]$ and this improves Theorem 4.1 in~\cite{Bento-Silva-QJM-2012} since in that paper it was necessary to have $\beta = \eps(1+2/q) + 1/q$.
\end{example}

\begin{example}
   For each $(t,s) \in (\R_0^+)^2_\ge$, set
   \begin{equation*}
      a(t,s)= D \prts{\dfrac{\mu(t)}{\mu(s)}}^a \nu(s)^\eps
      \quad \text{and} \quad
      b(t,s)=D \prts{\dfrac{\mu(t)}{\mu(s)}}^{-b} \nu(t)^\eps
   \end{equation*}
   where $\mu,\nu:\R_0^+ \to \R^+$ are growth rates, that is these functions are non decreasing, converge to $+\infty$ and $\mu(0)=\nu(0)=1$. Assume also, for each $r \in \R_0^+$, that $R(r)=\delta R_0(r)$ where $\delta$ is a positive number and that, for every $ r \in \R_0^+$ and every $u,v \in X$, we have
   	$$ \|f(r,u)-f(r,v)\| \le c \|u-v\| (\|u\|+\|v\|)^q,$$
   for some constants $c>0$ and $q > 0$. In this case, conditions~\eqref{eq:CondicaoTeo-local}, \eqref{eq:tau-local}, \eqref{eq:lbd-local} and~\eqref{ine:sn-local} correspond respectively to the conditions
   \begin{equation} \label{eq:cond1a-local}
   	\lim_{t \to +\infty} \mu(t)^{a-b} \nu(t)^\eps = 0,
   \end{equation}
   \begin{equation} \label{eq:cond2a-local}
   	\alpha = 2^q c D \delta^q \dint_0^{+\infty} \nu(r)^\eps R_0(r)^q \dr
      < +\infty,
   \end{equation}
   \begin{equation} \label{eq:cond3a-local}
      \beta = 2^q c D^2 \delta^q \sup_{s \in \N} \mu(s)^{b-a} \nu(s)^\eps \dint_s^{+\infty}
    	\mu(r)^{a-b} \nu(r)^\eps R_0(r)^q \dr < +\infty
    \end{equation}
   and
   \begin{equation} \label{eq:cond4a-local}
   	\frac{R_0(s) \nu(s)^\eps}{\mu(s)^a} \sup_{t \ge s}
   	\frac{\mu(t)^a}{R_0(t)} < +\infty.
   \end{equation}

   Thus, if the last four conditions are satisfied and $\delta >0$ is small enough so that~\eqref{ine:alpha_beta-local} holds, we obtain an invariant manifold $\cV^*_{\phi,R}$ given by~\eqref{eq:manifold-local} where
      $$ \| \Psi_{t-s}(s,\xi,\phi(s,\xi))
            - \Psi_{t-s}(s,\bar\xi,\phi(s,\bar\xi))\|
         \le \frac{2}{1-2\alpha} D \prts{\dfrac{\mu(t)}{\mu(s)}}^a \nu(s)^\eps
            \|\xi - \bar\xi\|,$$
   for every $(t,s) \in (\R_0^+)^2_\ge$ and every $\xi, \bar\xi \in B(R(s))$.

   Letting now $R_0(r)=\mu(r)^a$ and supposing that
      $$ \int_0^{+\infty} \mu(r)^{aq} \nu(r)^\eps \dr < + \infty$$
   it is clear that~\eqref{eq:cond2a-local},~\eqref{eq:cond3a-local} and~\eqref{eq:cond4a-local} are satisfied. Thus, assuming that~\eqref{eq:cond1a-local} holds and choosing $\delta > 0$ sufficiently small, we obtain a local stable manifold theorem that improves Theorem 2.1 in~\cite{Bento-Silva-JDDE}.
\end{example}

\begin{example}
   Given $a < 0 \le b$, $\eps \ge 0$ and $D \ge 1$, let
      $$ a(t,s) = D \e^{a(\rho(t)-\rho(s))+\eps \rho(s)}
         \quad \text{ and } \quad
         b(t,s) = D \e^{-b(\rho(t)-\rho(s))+\eps \rho(t)},$$
   where $\rho : \R_0^+ \to \R_0^+$ is an increasing $C^1$ function such that
      $$ \lim_{t \to +\infty} \dfrac{\log t}{\rho(t)} = 0.$$
   Assume that
      $$ R(r) = \delta \rho'(t)^{1/q} \e^{-\beta \rho(r)}$$
   where $\delta$ and $\beta$ are positive numbers. Then~\eqref{eq:CondicaoTeo-local} is equivalent to $a+\eps<b$, condition~\eqref{eq:tau-local} is equivalent to $\eps-\beta q <0$, condition~\eqref{eq:lbd-local} is equivalent to $2\eps -\beta q \le 0$ and $a+\beta < 0$ implies condition~\eqref{ine:sn-local}. Hence, if $a+\eps < b$ and $2\eps/q + a < 0$, choosing $\beta \in [2\eps/q,-a[$ and $\delta$ sufficiently small we get a local stable manifold theorem that improves Theorem 2 in~\cite{Barreira-Valls-JFA-2009-257-(1018-1029)}.
\end{example}

\begin{example}
   For each $(t,s) \in (\R_0^+)^2_\ge$, set
      $$ a(t,s) = (t-s+1)^a (s+1)^\eps
         \quad \text{and} \quad
         b(t,s)=(t-s+1)^{-b} (t+1)^\eps,$$
   for some constants $a < 0 \le b$ and $\eps>0$. Assume further that
   for every $r \in \R_0^+$ and every $u,v \in X$, we have
   	$$\|f(r,u)-f(r,v)\| \le c \|u-v\| (\|u\|+\|v\|)^q,$$
   for some $c>0$ and $q>0$ and that $R(r)=\delta (r+1)^{-\beta}$. In this case, conditions~\eqref{eq:tau-local} and \eqref{eq:lbd-local} are satisfied provided that $\beta \ge (2\eps+1)/q$, condition~\eqref{ine:sn-local} is satisfied if $a + \beta \le 0$ and condition~\eqref{eq:CondicaoTeo-local} corresponds to $a+\eps<b$. Therefore, if $a+\eps < b$, $(2\eps+1)/q \le -a$ and $\beta \in [(2\eps+1)/q,-a]$, considering a small enough $\delta>0$, we have a local stable manifold theorem.
\end{example}

%-----------------------------------------------------------------------------%
\section{Proof of Theorem~\ref{thm:global}}
%-----------------------------------------------------------------------------%
Next we will prove Theorem~\ref{thm:global}. Given $s \in \R_0^+$ and $v_s=(\xi,\eta)\in E_s \times F_s$, by~\eqref{eq:split-1a} and~\eqref{eq:split-1b}, considering the invariance in~\eqref{eq:invariance}, we conclude that, for each $t \ge s$, we must have
\begin{align}
   x(t,\xi) & = T_{t,s} \xi + \dint_s^t T_{t,r} P_r f(r,x(r,\xi),\phi(r,x(r,\xi))) \dr,
      \label{eq:dyn-split2a}\\
   \phi(t,x(t,\xi)) &= T_{t,s} \eta +
   \dint_s^t T_{t,r} Q_r f(r,x(r,\xi),\phi(r,x(r,\xi))) \dr,
      \label{eq:dyn-split2b}
\end{align}
for some $\phi \in \cX$.

To prove that equations~\eqref{eq:dyn-split2a} and~\eqref{eq:dyn-split2b} have solutions we will use Banach fixed point theorem in some suitable complete metric spaces.

In $\cX$ we define a metric by
\begin{equation}\label{def:metric:X}
   d(\phi,\psi)
   = \sup\set{\dfrac{\|\phi(s,\xi) - \psi(s,\xi)\|}{\|\xi\|} : s \in \R_0^+
         \text{ and } \xi \in E_s\setminus \set{0}}.
\end{equation}
for each $\phi, \psi \in \cX$.
It is easy to see that $\cX$ is a complete metric space with the metric
defined by~\eqref{def:metric:X}.

Let $\cB_s$ be the space of functions $x \colon [s,+\infty[ \times E_s \to X$ such that
\begin{align}
   & x(t,0) =0 \text{ for every } t \ge s, \label{cond-x_m-0}\\
   & x(t,\xi) \in E_t \text{ for every } t \ge s
      \text{ and every } \xi \in E_s, \label{cond-x_m-0a}\\
   & \norms{x} = \sup\set{\dfrac{\|x(t,\xi)\|}{a(t,s) \|\xi\|}
      \colon t \ge s, \ \xi \in E_s \setm{0}} < +\infty. \label{cond-x_m-1}
\end{align}
From~\eqref{cond-x_m-1} we obtain the following estimate
\begin{equation} \label{cond-x_m-1a}
   \|x(t,\xi)\|
   \le a(t,s) \norms{x} \|\xi\|
\end{equation}
for every $t \ge s$ and every $\xi \in E_s$. It is easy to see that $\prts{\cB_s, \norms{\cdot}}$ is a Banach space.

\begin{lemma}\label{lemma:Exist-Suc-x_m}
   For each $\phi\in \cX$ and each $s \in \R_0^+$, there exists a unique function $x^{\phi}\in \cB_s$ satisfying equation~\eqref{eq:dyn-split2a}. Moreover
   \begin{align}
      & x^\phi(s,\xi) = \xi, \label{cond-x^phi_n(xi)=xi}\\
      & \norms{x^\phi} \le \dfrac{1}{1 - 2 \alpha},\label{eq:norm_x^phi}\\
      & \label{ineq:x^phi_m(xi)- x^phi_m(barxi)<=...}
         \|x^\phi(t,\xi) - x^\phi(t,\bar\xi)\|
         \le \dfrac{1}{1-2\alpha} a(t,s) \|\xi-\bar\xi\|
   \end{align}
   for every $t \ge s$ and $\xi$, $\bar\xi \in E_s$. Furthermore,
   \begin{equation} \label{ineq:d_n(x^phi-x^psi)<=d(phi,psi)}
      \norms{x^\phi - x^\psi}
      \le \dfrac{\alpha}{\prts{1-2\alpha}^2} d(\phi,\psi)
   \end{equation}
   for each $\phi, \psi \in \cX$.
\end{lemma}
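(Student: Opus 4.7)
The plan is to realise $x^\phi$ as the unique fixed point of the operator $J_\phi\colon \cB_s \to \cB_s$ given by
\begin{equation*}
   (J_\phi x)(t,\xi)
   = T_{t,s}\xi + \dint_s^t T_{t,r} P_r\, f\bigl(r,x(r,\xi),\phi(r,x(r,\xi))\bigr)\dr.
\end{equation*}
I will first check that $J_\phi$ takes values in $\cB_s$. Conditions~\eqref{cond-x_m-0} and~\eqref{cond-x_m-0a} follow from $f(r,0)=0$ (via~\eqref{cond-phi-0}) and from $T_{t,r}P_r = P_t T_{t,r}$, respectively. For the weighted norm, I combine~\eqref{cond-f-2}, the estimate $\|\phi(r,x)\|\le\|x\|$ coming from~\eqref{cond-phi-1a}, the dichotomy bound~\ref{eq:splitbounds1}, and~\eqref{cond-x_m-1a}, to get
\begin{equation*}
   \|f(r,x(r,\xi),\phi(r,x(r,\xi)))\|
   \le 2\Lip(f_r)\,a(r,s)\norms{x}\|\xi\|,
\end{equation*}
so that $\|(J_\phi x)(t,\xi)\| \le a(t,s)\|\xi\| + 2\norms{x}\|\xi\|\int_s^t a(t,r)a(r,s)\Lip(f_r)\dr \le (1+2\alpha\norms{x})a(t,s)\|\xi\|$ by definition of $\alpha$ in~\eqref{eq:tau}. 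Hence $\norms{J_\phi x}\le 1+2\alpha\norms{x}<+\infty$.

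Next I prove that $J_\phi$ is a contraction on $\cB_s$ with constant $2\alpha$. Applying~\eqref{cond-f-1} and then~\eqref{cond-phi-1}, the integrand of $(J_\phi x)(t,\xi)-(J_\phi\bar x)(t,\xi)$ is bounded in norm by $2\Lip(f_r)\|x(r,\xi)-\bar x(r,\xi)\|$, and another application of~\eqref{cond-x_m-1a} followed by~\eqref{eq:tau} gives $\norms{J_\phi x-J_\phi\bar x}\le 2\alpha\norms{x-\bar x}$. Hypothesis~\eqref{ine:alpha_beta} forces $2\alpha<1$, so the Banach fixed point theorem produces a unique $x^\phi\in\cB_s$ with $J_\phi x^\phi = x^\phi$; feeding $x=x^\phi$ into the estimate of the previous paragraph yields $\norms{x^\phi}\le 1+2\alpha\norms{x^\phi}$, that is~\eqref{eq:norm_x^phi}, while~\eqref{cond-x^phi_n(xi)=xi} is immediate by evaluating the fixed point equation at $t=s$.

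For the Lipschitz estimate~\eqref{ineq:x^phi_m(xi)- x^phi_m(barxi)<=...}, fix $\xi,\bar\xi\in E_s$ and set $g(t)=\|x^\phi(t,\xi)-x^\phi(t,\bar\xi)\|$. Subtracting the two fixed point identities and reusing the calculations above yields
\begin{equation*}
   g(t) \le a(t,s)\|\xi-\bar\xi\| + 2\dint_s^t a(t,r)\Lip(f_r)\,g(r)\dr.
\end{equation*}
The bound $g(r)\le 2a(r,s)\norms{x^\phi}\max\{\|\xi\|,\|\bar\xi\|\}$ shows that $K:=\sup_{t\ge s} g(t)/a(t,s)$ is finite, and then inserting $g(r)\le K a(r,s)$ into the integral and using~\eqref{eq:tau} produces $K\le \|\xi-\bar\xi\|+2\alpha K$, hence $K\le\|\xi-\bar\xi\|/(1-2\alpha)$.

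Finally, for the dependence on $\phi$ recorded in~\eqref{ineq:d_n(x^phi-x^psi)<=d(phi,psi)}, I subtract the fixed point equations for $x^\phi$ and $x^\psi$ and split
\begin{equation*}
   \phi(r,x^\phi(r,\xi))-\psi(r,x^\psi(r,\xi))
   = \bigl[\phi(r,x^\phi(r,\xi))-\psi(r,x^\phi(r,\xi))\bigr] + \bigl[\psi(r,x^\phi(r,\xi))-\psi(r,x^\psi(r,\xi))\bigr],
\end{equation*}
whose norm is at most $d(\phi,\psi)\|x^\phi(r,\xi)\| + \|x^\phi(r,\xi)-x^\psi(r,\xi)\|$ by~\eqref{def:metric:X} and~\eqref{cond-phi-1}. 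Arguing as before — bounding $\|x^\phi(r,\xi)\|\le a(r,s)\norms{x^\phi}\|\xi\|$, dividing by $a(t,s)\|\xi\|$ and taking the supremum that defines $\norms{x^\phi-x^\psi}$ (finite thanks to~\eqref{eq:norm_x^phi}) — leads to $\norms{x^\phi-x^\psi}\le 2\alpha\norms{x^\phi-x^\psi}+\alpha\norms{x^\phi}d(\phi,\psi)$, which together with~\eqref{eq:norm_x^phi} gives the stated bound. The only genuinely delicate point is the two Gronwall-type steps above: since a pointwise differential inequality is unavailable, I must first certify that the relevant weighted supremum is finite before closing the inequality on itself, and this is where the bookkeeping is the most careful.
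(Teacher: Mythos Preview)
Your proof is correct and follows essentially the same contraction-mapping scheme as the paper: the operator $J_\phi$, the bound $\norms{J_\phi x}\le 1+2\alpha\norms{x}$, the contraction constant $2\alpha$, and the derivation of~\eqref{ineq:d_n(x^phi-x^psi)<=d(phi,psi)} all match (your splitting of $\phi(r,x^\phi)-\psi(r,x^\psi)$ through $\psi(r,x^\phi)$ rather than $\phi(r,x^\psi)$ is an inessential symmetry). The one tactical difference is in~\eqref{ineq:x^phi_m(xi)- x^phi_m(barxi)<=...}: the paper shows the Lipschitz bound is preserved by $J_\phi$ and then iterates from $z(t,\xi)=T_{t,s}\xi$, whereas you work directly with the fixed point, first certifying that $K=\sup_{t\ge s}g(t)/a(t,s)$ is finite and then closing the inequality $K\le\|\xi-\bar\xi\|+2\alpha K$ on itself---a slightly more direct variant of the same idea.
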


\begin{proof}
   Given $\phi \in \cX$, we define an operator $J = J_\phi$ in~$\cB_s$ by
   \begin{equation} \label{def:J}
      (Jx)(t,\xi)=
      \begin{cases}
         \xi & \text{ if } t = s,\\
         T_{t,s} \xi + \dint_s^t T_{t,r} P_r
            f(r,x(r,\xi),\phi(r,x(r,\xi))) \dr & \text{ if } t > s.
      \end{cases}
   \end{equation}
   One can easily verify from~\eqref{cond-x_m-0}, \eqref{cond-phi-0} and
   \eqref{cond-f-0} that $(Jx)(t,0)=0$ for every $t \ge s$. It is also easy to see that $(Jx)(t, \xi) \in E_t$ for every $t \ge s$ and every $\xi \in E_s$ and thus $Jx$ verifies~\eqref{cond-x_m-0a}.

   Let $x \in \cB_s$ and let $\xi \in E_s$. From~\eqref{def:J},~\ref{eq:splitbounds1},~\eqref{cond-f-2}, \eqref{cond-phi-1a},~\eqref{cond-x_m-1a} and~\eqref{eq:tau} it follows for every $t > s$ that
   \begin{align*}
      \|(Jx)(t,\xi)\|
      & \le \| T_{t,s}P_s\| \, \|\xi \|
         + \dint_s^t \|T_{t,r} P_r\| \,
         \|f(r,x(r,\xi),\phi(r,x(r,\xi)))\| \dr\\
      & \le a(t,s) \|\xi\| + \dint_s^t a(t,r)  \, \Lip(f_r) \,
         \prts{\|x(r,\xi)\|+\|\phi(r,x(r,\xi))\|} \dr\\
      & \le a(t,s) \|\xi\| + \dint_s^t a(t,r)  \, \Lip(f_r) \,
         2 \|x(r,\xi)\| \dr\\
      & \le a(t,s) \|\xi\| + 2 \dint_s^t a(t,r)  \, \Lip(f_r) \,
         a(r,s) \norms{x} \|\xi\| \dr\\
      & \le a(t,s) \|\xi\| + 2 \alpha \norms{x} a(t,s) \|\xi\|\\
      & \le \prts{1 + 2 \alpha \norms{x}} a(t,s) \|\xi\|
   \end{align*}
   and this implies
   \begin{equation}\label{ineq:abs(Jx)}
      \norms{Jx} \le 1 + 2 \alpha \norms{x} <+\infty.
   \end{equation}
   Therefore we have the inclusion $J(\cB_s)\subset\cB_s$.

   We now show that $J$ is a contraction in $\cB_s$. Let $x,y\in \cB_s$. Then
   \begin{equation}\label{ineq:norm:J_x_m(xi)-J_y_m(bar_xi)}
      \begin{split}
         & \|(Jx)(t,\xi)-(Jy)(t,\xi)\|\\
         & \le \dint_s^t \|T_{t,r}P_r\| \
            \|f(r,x(r,\xi),\phi(r,x(r,\xi)))
          - f(r,y(r,\xi),\phi(r,y(r,\xi)))\| \dr
      \end{split}
   \end{equation}
   for every $t \ge s$ and every $\xi \in E_s$. By~\eqref{cond-f-1},~\eqref{cond-phi-1} and~\eqref{cond-x_m-1a} we have for every $r \ge s$
   \begin{equation}\label{ineq:norm:f_k(x_k)-f_k(y_k)}
      \begin{split}
         & \|f(r,x(r,\xi),\phi(r,x(r,\xi)))
            - f(r,y(r,\xi),\phi(r,y(r,\xi)))\|\\
         & \le \Lip(f_r) \prts{\| x(r,\xi) - y(r,\xi)\|
            + \|\phi(r,x(r,\xi)) - \phi(r,y(r,\xi))\|}\\
         & \le 2 \Lip(f_r) \| x(r,\xi) - y(r,\xi)\| \\
         & \le 2 \Lip(f_r) a(r,s) \|\xi\| \ \norms{x-y}.
      \end{split}
   \end{equation}
   Hence, from \eqref{ineq:norm:J_x_m(xi)-J_y_m(bar_xi)},~\ref{eq:splitbounds1}, \eqref{ineq:norm:f_k(x_k)-f_k(y_k)} and~\eqref{eq:tau} we have
   \begin{align*}
      \|(Jx)(t,\xi) -(Jy)(t,\xi)\|
      & \le 2 \|\xi\| \ \norms{x-y}
         \dint_s^t a(t,r) a(r,s) \Lip(f_r) \dr\\
      & \le 2 \alpha a(t,s) \|\xi\| \ \norms{x-y}
   \end{align*}
   for every $t \ge s$ and every $\xi \in E_s$ and this implies
      $$ \norms{Jx-Jy} \le 2 \alpha \norms{x-y}.$$
   By~\eqref{ine:alpha_beta} it follows that $\alpha < 1/2$ and therefore $J$ is a contraction in $\cB_s$. Because $\cB_s$ is a Banach space, by the Banach fixed point theorem, the map $J$ has a unique fixed point $x^\phi$ in~$\cB_s$, which is thus the desired function. Moreover, is obvious that~\eqref{cond-x^phi_n(xi)=xi} is true and by~\eqref{ineq:abs(Jx)} we have
      $$ \norms{x^\phi} \le 1 + 2 \alpha \norms{x^\phi}$$
   and since $\alpha < 1/2$ we have~\eqref{eq:norm_x^phi}.

   To prove~\eqref{ineq:x^phi_m(xi)- x^phi_m(barxi)<=...} we will first prove that, for every $x \in \cB_s$, if
   \begin{equation}\label{eq:|x(t,xi)-x(t,barxi)|}
      \|x(t,\xi)-x(t,\bar\xi)\|
      \le \dfrac{1}{1-2\alpha} a(t,s) \|\xi-\bar\xi\|
   \end{equation}
      $$ $$
   for every $t \ge s$ and every $\xi,\bar\xi \in E_s$, then
      $$ \|\prts{Jx}(t,\xi)-\prts{Jx}(t,\bar\xi)\|
         \le \dfrac{1}{1-2\alpha} a(t,s) \|\xi-\bar\xi\|$$
   for every $t \ge s$ and every $\xi,\bar\xi \in E_s$. In fact, by~\ref{eq:splitbounds1}, we have
   \begin{align*}
      \|\prts{Jx}(t,\xi)-\prts{Jx}(t,\bar\xi)\|
      & \le \|T_{t,s}P_s\| \|\xi-\bar\xi\|
            + \int_s^t \|T_{t,r} P_r\| \gamma(r) \dr \\
      & \le a(t,s) \|\xi-\bar\xi\| + \int_s^t a(t,r) \gamma(r) \dr,
   \end{align*}
   where $\gamma(r) = \|f(r,x(r,\xi),\phi(r,x(r,\xi))) - f(r,x(r,\bar\xi),\phi(r,x(r,\bar\xi)))\|.$ By~\eqref{cond-f-1},~\eqref{cond-phi-1} and~\eqref{eq:|x(t,xi)-x(t,barxi)|} we have
   \begin{align*}
      \gamma(r)
      & \le \Lip(f_r) \prts{\|x(r,\xi) - x(r,\bar\xi)\|
         + \|\phi(r,x(r,\xi))- \phi(r,x(r,\bar\xi))\|}\\
      & \le 2 \Lip(f_r) \|x(r,\xi) - x(r,\bar\xi)\| \\
      & \le \dfrac{2}{1-2\alpha} \Lip(f_r) a(r,s) \|\xi-\bar\xi\|
   \end{align*}
   and thus, by~\eqref{eq:tau},
   \begin{align*}
      & \|\prts{Jx}(t,\xi)-\prts{Jx}(t,\bar\xi)\|\\
      & \le a(t,s) \|\xi-\bar\xi\| + \dfrac{2}{1-2\alpha} \|\xi-\bar\xi\|
         \int_s^t a(t,r) a(r,s) \Lip(f_r) \dr\\
      & \le a(t,s) \|\xi-\bar\xi\|
         + \dfrac{2\alpha}{1-2\alpha} a(t,s) \|\xi-\bar\xi\|\\
      & = \dfrac{1}{1-2\alpha} a(t,s) \|\xi-\bar\xi\|.
   \end{align*}
   Now let $z \in B_s$ be given by $z(t,\xi)=T_{t,s}P_s \xi$ for every $t \ge s$ and $\xi \in E_s$. Since
      $$ \|z(t,\xi) - z(t,\bar\xi)\|
         \le a(t,s) \|\xi-\bar\xi\|
         \le \dfrac{1}{1-2\alpha} a(t,s) \|\xi-\bar\xi\|,$$
   we have
      $$ \|\prts{J^k z}(t,\xi) - \prts{J^k z}(t,\bar\xi)\|
         \le \dfrac{1}{1-2\alpha} a(t,s) \|\xi-\bar\xi\|$$
   for every $k \in \N$. Letting $k \to +\infty$ in the last inequality we have~\eqref{ineq:x^phi_m(xi)- x^phi_m(barxi)<=...}.

   Next we will prove~\eqref{ineq:d_n(x^phi-x^psi)<=d(phi,psi)}. Let $\phi, \psi \in \cX$. From~\eqref{eq:dyn-split2a} we have
   \begin{equation}\label{ineq:norm:x_m^phi(xi)-x_m^psi(xi)}
      \begin{split}
         & \|x^\phi(t,\xi) - x^\psi(t,\xi)\|\\
         & \le \int_s^t \|T_{t,r} P_r\| \,
            \|f(r,x^\phi(r,\xi),\phi(r,x^\phi(r,\xi)))
            - f(r,x^\psi(r,\xi),\psi(r,x^\psi(r,\xi)))\| \dr
      \end{split}
   \end{equation}
   for every $t \ge s$ and every $\xi \in E_s$. By~\eqref{cond-f-1},~\eqref{cond-phi-1},~\eqref{cond-x_m-1}, ~\eqref{def:metric:X},~\eqref{cond-x_m-1a} and~\eqref{eq:norm_x^phi} it follows that
   \begin{equation}\label{ineq:norm:f_k(x^phi_k...)- f_k(x^psi_k...)}
      \begin{split}
         & \|f(r,x^\phi(r,\xi),\phi(r,x^\phi(r,\xi)))
               - f(r,x^\psi(r,\xi),\psi(r,x^\psi(r,\xi)))\|\\
         & \le \Lip(f_r) \prts{\|x^\phi(r,\xi) - x^\psi(r,\xi)\|
            + \|\phi(r,x^\phi(r,\xi)) - \psi(r,x^\psi(r,\xi))\|}\\
         & \le \Lip(f_r) \prts{2 \|x^\phi(r,\xi) - x^\psi(r,\xi)\|
            + \|\phi(r,x^\psi(r,\xi)) - \psi(r,x^\psi(r,\xi))\|}\\
         & \le \Lip(f_r) \prtsr{2 a(r,s) \|\xi\| \norms{x^\phi-x^\psi}
            + \|x^\psi(r,\xi)\| d(\phi,\psi)}\\
         & \le \Lip(f_r) a(r,s) \|\xi\| \prtsr{2 \norms{x^\phi-x^\psi}
            + \dfrac{1}{1-2\alpha} d(\phi,\psi)}
      \end{split}
   \end{equation}
   for every $r \ge s$. Hence by~\eqref{ineq:norm:x_m^phi(xi)-x_m^psi(xi)}, \eqref{ineq:norm:f_k(x^phi_k...)- f_k(x^psi_k...)},~\ref{eq:splitbounds1} and~\eqref{eq:tau} we get
   \begin{align*}
      & \|x^\phi(t,\xi) - x^\psi(t,\xi)\|\\
      & \le \|\xi\| \prtsr{2 \norms{x^\phi-x^\psi}
         + \dfrac{1}{1-2\alpha} d(\phi,\psi)}
         \dint_s^t a(t,r)  a(r,s) \Lip(f_r) \dr\\
      & \le a(t,s) \|\xi\| \prtsr{2 \norms{x^\phi - x^\psi}
         + \dfrac{1}{1-2\alpha} d(\phi,\psi)} \alpha
   \end{align*}
   for every $t \ge s$ and every $\xi \in E_s$ and this implies
      $$ \norms{x^\phi-x^\psi}
         \le 2 \alpha \norms{x^\phi-x^\psi}
            + \dfrac{\alpha}{1-2\alpha} d(\phi,\psi).$$
   Therefore
      $$ \norms{x^\phi-x^\psi}
         \le \dfrac{\alpha}{\prts{1-2\alpha}^2} d(\phi,\psi),$$
   and we get~\eqref{ineq:d_n(x^phi-x^psi)<=d(phi,psi)}.
\end{proof}

Now we will turn our attention to identity~\eqref{eq:dyn-split2b}.

\begin{lemma} \label{lemma:equiv}
   Let $\phi \in \cX$. The following properties are equivalent:
   \begin{enumerate}[\lb=$\alph*)$,\lm=5mm]
      \item for every $s \in \R_0^+$, $t \ge s$ and $\xi \in E_s$ the identity~\eqref{eq:dyn-split2b} holds with $x = x^\phi$;
      \item for every $s \in \R_0^+$ and every $\xi \in E_s$
          \begin{equation} \label{eq:phi_n}
            \phi(s,\xi)
               = - \dint_s^{+\infty} (T_{r,s}|_{F_s})^{-1}
                  Q_r f(r,x^\phi(r,\xi), \phi(r,x^\phi(r,\xi))) \dr
          \end{equation}
          holds.
   \end{enumerate}
\end{lemma}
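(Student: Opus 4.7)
The plan is to derive (b) from (a) by inverting the linear evolution on $F_s$ and passing to the limit $t \to +\infty$, and conversely to derive (a) from (b) by applying $T_{t,s}$ to~\eqref{eq:phi_n} and splitting the resulting improper integral at $t$. The central algebraic tool in both directions is the pair of cocycle identities
\[
(T_{t,s}|_{F_s})^{-1} T_{t,r} Q_r = (T_{r,s}|_{F_s})^{-1} Q_r \quad (s \le r \le t), \qquad T_{t,s}|_{F_s} (T_{r,s}|_{F_s})^{-1} Q_r = (T_{r,t}|_{F_t})^{-1} Q_r \quad (r \ge t),
\]
both of which follow from the cocycle property of $T_{\cdot,\cdot}$ together with invariance of the kernels in (S1)--(S3).

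For (a)$\Rightarrow$(b), observing that at $t = s$ equation~\eqref{eq:dyn-split2b} forces $\eta = \phi(s,\xi)$, I apply $(T_{t,s}|_{F_s})^{-1}$ to both sides of~\eqref{eq:dyn-split2b} (with $x = x^\phi$) and use the first identity above to obtain
\[
(T_{t,s}|_{F_s})^{-1} \phi(t, x^\phi(t,\xi)) = \phi(s,\xi) + \int_s^t (T_{r,s}|_{F_s})^{-1} Q_r f(r, x^\phi(r,\xi), \phi(r, x^\phi(r,\xi))) \dr.
\]
By~\ref{eq:splitbounds2}, \eqref{cond-phi-1a}, \eqref{cond-x_m-1a} and~\eqref{eq:norm_x^phi} the left-hand side is dominated by $b(t,s) a(t,s) \|x^\phi\|_s \|\xi\|$, which tends to $0$ by~\eqref{eq:CondicaoTeo}. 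The integrand on the right is dominated by $2 b(r,s) a(r,s) \Lip(f_r) \|x^\phi\|_s \|\xi\|$, so the improper integral converges by~\eqref{eq:lbd}, and letting $t \to +\infty$ yields~\eqref{eq:phi_n}.

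For (b)$\Rightarrow$(a), I will first establish a cocycle property for the fibre-wise solution: for $s \le t \le r$ and $\xi \in E_s$, the value at $r$ of $x^\phi$ with initial time $s$ and initial vector $\xi$ coincides with the value at $r$ of $x^\phi$ with initial time $t$ and initial vector $x^\phi(t,\xi) \in E_t$. This is seen by splitting the fixed-point equation~\eqref{eq:dyn-split2a} at $t$: the restriction of $x^\phi(\cdot,\xi)$ to $[t,+\infty[$ satisfies exactly the defining equation for the $x^\phi$ based at $t$ with initial vector $x^\phi(t,\xi)$, so the uniqueness part of Lemma~\ref{lemma:Exist-Suc-x_m} identifies the two. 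With this in hand, I apply $T_{t,s}|_{F_s}$ to~\eqref{eq:phi_n} (noting that $\phi(s,\xi) \in F_s$), split the integral at $t$, and use the two cocycle identities: the $[s,t]$-piece becomes the integral appearing in~\eqref{eq:dyn-split2b}, and the $[t,+\infty[$-piece becomes $\phi(t, x^\phi(t,\xi))$ after a second application of~\eqref{eq:phi_n}, now at initial time $t$ and initial vector $x^\phi(t,\xi)$. Rearranging yields (a).

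The main obstacle I anticipate is the cocycle property for $x^\phi$: the family $\{x^\phi(\cdot,\cdot)\}$ is constructed separately for each initial time $s$ in Lemma~\ref{lemma:Exist-Suc-x_m}, and one must verify via uniqueness that these individual fixed points glue into a genuine semiflow. Once that point is settled, the rest of the argument is routine bookkeeping on $T_{t,s}|_{F_s}$ and its inverse, together with a tail estimate furnished by~\eqref{eq:CondicaoTeo} and~\eqref{eq:lbd}.
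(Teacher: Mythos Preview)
Your proposal is correct and follows essentially the same route as the paper's own proof: both directions use the cocycle identities for $T_{\cdot,\cdot}|_{F}$ that you state, the implication (a)$\Rightarrow$(b) is obtained by applying $(T_{t,s}|_{F_s})^{-1}$ and letting $t\to+\infty$ with the tail estimate from~\eqref{eq:CondicaoTeo} and~\eqref{eq:lbd}, and (b)$\Rightarrow$(a) is obtained by applying $T_{t,s}$, splitting at $t$, and identifying the tail with $\phi(t,x^\phi(t,\xi))$ via~\eqref{eq:phi_n} at initial time $t$---the paper invokes exactly the semiflow/cocycle property of $x^\phi$ you isolate, phrasing it simply as ``the uniqueness of the sequences $x^\phi$''. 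Your explicit articulation of that gluing step is in fact clearer than the paper's one-line appeal to uniqueness.
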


\begin{proof}
   First we prove that the integral in~\eqref{eq:phi_n} is convergent.
   From~\ref{eq:splitbounds2},~\eqref{cond-f-2},~\eqref{cond-phi-1a}, \eqref{cond-x_m-1a} and~\eqref{eq:lbd}, we conclude that for every $s \in \R_0^+$ and every $\xi \in E_s$
   \begin{align*}
      & \dint_s^{+\infty} \|(T_{r,s}|_{F_s})^{-1} Q(r)
         f(r,x^\phi(r,\xi),\phi(r,x^\phi(r,\xi)))\| \dr \\
      & \le \dint_s^{+\infty} \|(T_{r,s}|_{F_s})^{-1} Q(r)\| \
         \|f(r,x^\phi(r,\xi),\phi(r,x^\phi(r,\xi)))\| \dr \\
      & \le \dint_s^{+\infty} b(r,s) \Lip(f_r)
         \prts{\|x^\phi(r,\xi)\| + \|\phi(r,x^\phi(r,\xi))\|} \dr\\
      & \le 2 \|\xi\| \norms{x^\phi}
         \dint_s^{+\infty} b(r,s) a(r,s) \Lip(f_r) \dr\\
      & \le 2 \beta \|\xi\| \norms{x^\phi},
   \end{align*}
   and thus the integral converges.

   Now, let us suppose that~\eqref{eq:dyn-split2b} holds with $x = x^\phi$ for every $s \in \R_0^+$, every $t \ge s$ and every $\xi \in E_s$. Then, since $(T_{t,s}|_{F_s})^{-1}T_{t,r}|_{F_r}=(T_{r,s}|_{F_s})^{-1}$ for $s \le r \le t$, equation~\eqref{eq:dyn-split2b} can be written in the following equivalent form
   \begin{equation}\label{eq:equiv1A}
      \phi(s,\xi) = (T_{t,s}|_{F_s})^{-1} \phi(t,x^\phi(t,\xi)) -
      \dint_s^t (T_{r,s}|_{F_s})^{-1}
      Q_r f(r,x^\phi(r,\xi),\phi(r,x^\phi(r,\xi))) \dr.
   \end{equation}
   Using~\ref{eq:splitbounds2},~\eqref{cond-phi-1a} and~\eqref{cond-x_m-1a},
   we have
   \begin{align*}
      \|(T_{t,s}|_{F_s})^{-1} \phi(t,x^\phi(t,\xi))\|
      & = \|(T_{t,s}|_{F_s})^{-1} Q_t \, \phi(t,(x^\phi(t,\xi)))\| \\
      & \le b(t,s) \|x^\phi(t,\xi)\|\\
      & \le b(t,s) a(t,s) \|\xi\| \norms{x^\phi}
   \end{align*}
   and by~\eqref{eq:CondicaoTeo} this converges to zero when $t \to +\infty$.
   Hence, letting $t \to +\infty$ in \eqref{eq:equiv1A} we obtain the
   identity~\eqref{eq:phi_n} for every $s \in \R_0^+$ and every $\xi \in E_s$.

   We now assume that for every $s \in \R_0^+$, $t \ge s$ and $\xi \in E_s$ the identity~\eqref{eq:phi_n} holds. Therefore
      $$ T_{t,s} \phi(s,\xi)
         = - \dint_s^{+\infty} T_{t,s} (T_{r,s}|_{F_s})^{-1}
         Q(r) f(r,x^\phi(r,\xi),\phi(r,x^\phi(r,\xi))) \dr,$$ and
   thus it follows from \eqref{eq:phi_n} and the uniqueness of the sequences
   $x^\phi$ that
   \begin{align*}
      T_{t,s} \phi(s,\xi) & + \dint_s^t T_{t,r}
         Q_r f(r,x^\phi(r,\xi),\phi(r,x^\phi(r,\xi))) \dr\\
      & = - \dint_t^{+\infty} (T_{r,t}|_{F_r})^{-1}
            Q_r f(r,x^\phi(r,\xi),\phi(r,x^\phi(r,\xi))) \dr\\
      & = \phi(t,x^\phi(t,\xi))
   \end{align*}
   for every $s \in \R_0^+$, every $t \ge s$ and every $\xi \in E_s$. This proves the lemma.
\end{proof}

\begin{lemma} \label{lemma:Exist-Suc-phi}
   There is a unique $\phi \in \cX$ such that
      $$ \phi(s,\xi)
         = - \dint_s^{+\infty} (T_{r,s}|_{F_s})^{-1}
            Q_r f(r,x^\phi(r,\xi), \phi(r,x^\phi(r,\xi))) \dr$$
   for every $s \in \R_0^+$ and every $\xi \in E_s$.
\end{lemma}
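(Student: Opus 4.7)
I plan to prove the lemma by setting up a contraction mapping argument on the complete metric space $(\cX, d)$ defined earlier. Define the operator $\Phi:\cX \to \cX$ by
\begin{equation*}
   (\Phi\phi)(s,\xi) = -\dint_s^{+\infty} (T_{r,s}|_{F_s})^{-1} Q_r f(r, x^\phi(r,\xi), \phi(r,x^\phi(r,\xi))) \dr,
\end{equation*}
where $x^\phi$ is the unique function in $\cB_s$ provided by Lemma~\ref{lemma:Exist-Suc-x_m}. The convergence of this integral has already been established in the proof of Lemma~\ref{lemma:equiv}, with the bound $\|(\Phi\phi)(s,\xi)\| \le 2\beta\|\xi\|\norms{x^\phi} \le 2\beta/(1-2\alpha)\,\|\xi\|$. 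The goal is then to find a fixed point of $\Phi$.

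The first step is to verify that $\Phi(\cX) \subseteq \cX$. Properties \eqref{cond-phi-0} and \eqref{cond-phi-0a} are immediate: $(\Phi\phi)(s,0)=0$ because $x^\phi(r,0)=0$ (use uniqueness in $\cB_s$ and $f(r,0,0)=0$) combined with $\phi(r,0)=0$, and the integrand lies in $F_s$ since $(T_{r,s}|_{F_s})^{-1}:F_r\to F_s$. The delicate condition is the Lipschitz bound \eqref{cond-phi-1}. Using \ref{eq:splitbounds2}, the Lipschitz property \eqref{cond-f-1} of $f_r$, the Lipschitz property \eqref{cond-phi-1} of $\phi$, and the estimate \eqref{ineq:x^phi_m(xi)- x^phi_m(barxi)<=...}, I will estimate
\begin{equation*}
   \|(\Phi\phi)(s,\xi)-(\Phi\phi)(s,\bar\xi)\| \le \dint_s^{+\infty} b(r,s)\,\Lip(f_r)\cdot 2\|x^\phi(r,\xi)-x^\phi(r,\bar\xi)\|\dr \le \dfrac{2\beta}{1-2\alpha}\|\xi-\bar\xi\|.
\end{equation*}
Since the hypothesis \eqref{ine:alpha_beta} forces $2\alpha + 2\beta < 1$ (in both cases of the max), the coefficient $2\beta/(1-2\alpha)$ is strictly less than $1$, giving \eqref{cond-phi-1} for $\Phi\phi$.

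For the contraction, let $\phi,\psi \in \cX$ and split
\begin{equation*}
   \|\phi(r,x^\phi(r,\xi))-\psi(r,x^\psi(r,\xi))\| \le \|x^\phi(r,\xi)-x^\psi(r,\xi)\| + \|x^\psi(r,\xi)\|\,d(\phi,\psi),
\end{equation*}
exactly as in~\eqref{ineq:norm:f_k(x^phi_k...)- f_k(x^psi_k...)}. Combined with the Lipschitz estimate on $f$, the bound $\|x^\psi(r,\xi)\| \le a(r,s)\|\xi\|/(1-2\alpha)$ from \eqref{eq:norm_x^phi}, and the crucial estimate \eqref{ineq:d_n(x^phi-x^psi)<=d(phi,psi)}, the integral defining $(\Phi\phi-\Phi\psi)(s,\xi)$ is bounded by
\begin{equation*}
   \|\xi\|\,d(\phi,\psi)\left[\dfrac{2\alpha}{(1-2\alpha)^2}+\dfrac{1}{1-2\alpha}\right] \dint_s^{+\infty} b(r,s)a(r,s)\Lip(f_r)\dr \le \dfrac{\beta}{(1-2\alpha)^2}\,\|\xi\|\,d(\phi,\psi),
\end{equation*}
so $d(\Phi\phi,\Phi\psi) \le \beta/(1-2\alpha)^2\,d(\phi,\psi)$.

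The main obstacle is precisely the interplay between the two ingredients of hypothesis \eqref{ine:alpha_beta}: the self-map property \eqref{cond-phi-1} requires $2\alpha+2\beta<1$, while the contraction factor $\beta/(1-2\alpha)^2$ is less than $1$ exactly when $2\alpha+\sqrt\beta<1$. The $\max\{2\beta,\sqrt\beta\}$ in \eqref{ine:alpha_beta} is tailored to cover both regimes simultaneously. Once both estimates are secured, the Banach fixed point theorem applied to $\Phi$ on the complete metric space $(\cX,d)$ yields a unique $\phi \in \cX$ with $\Phi\phi=\phi$, which is exactly the statement of the lemma.
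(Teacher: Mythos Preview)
Your proposal is correct and follows essentially the same approach as the paper: define the operator $\Phi$ on $\cX$, verify $\Phi(\cX)\subseteq\cX$ via the Lipschitz bound $2\beta/(1-2\alpha)<1$, obtain the contraction constant $\beta/(1-2\alpha)^2<1$, and apply the Banach fixed point theorem. Your explicit remark that the two inequalities $2\alpha+2\beta<1$ and $2\alpha+\sqrt{\beta}<1$ correspond precisely to the self-map and contraction requirements, respectively, nicely clarifies the role of the $\max$ in hypothesis~\eqref{ine:alpha_beta}, which the paper leaves implicit.
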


\begin{proof}
   We consider the operator $\Phi$ defined for each $\phi\in \cX$ by
   \begin{equation} \label{eq:op-Phi}
      (\Phi \phi)(t,\xi) =
         - \dint_s^{+\infty} (T_{r,s}|_{F_s})^{-1} Q_r
         f(r,x^\phi(r,\xi), \phi(r,x^\phi(r,\xi))) \dr
   \end{equation}
   where $x^\phi \in \cB_s$ is the unique function given by Lemma~\ref{lemma:Exist-Suc-x_m}. It follows from~\eqref{cond-x_m-0}, \eqref{cond-phi-0}, \eqref{cond-f-0} and~\eqref{eq:op-Phi} that $(\Phi\phi)(s,0)=0$ for each $s\in\R_0^+$. It is easy to see that $(\Phi\phi)(t,\xi) \in F_t$ for every $(t, \xi) \in G$ and thus $\Phi\phi$ verifies~\eqref{cond-phi-0a}.

   Furthermore, given $s \in \R_0^+$ and $\xi, \bar\xi \in E_s$, by~\ref{eq:splitbounds2},~\eqref{cond-f-1},~\eqref{eq:lbd}, ~\eqref{ineq:x^phi_m(xi)- x^phi_m(barxi)<=...} and~\eqref{eq:lbd} we have
   \begin{align*}
      & \|(\Phi \phi)(s,\xi) - (\Phi \phi)(s,\bar\xi)\| \\
      & \le \dint_s^{+\infty} \|(T_{r,s}|_{F_s})^{-1} Q_r\|
         \cdot \|f(r,x^\phi(r,\xi),\phi(r,x^\phi(r,\xi)))
         - f(r,x^\phi(r,\bar\xi),\phi(r,x^\phi(r,\bar\xi)))\| \dr\\
      & \le \dint_s^{+\infty} b(r,s) \ \Lip(f_r) \
         2 \|x^\phi(r,\xi) - x^\phi(r,\bar\xi)\| \dr \\
      & \le \dfrac{2}{1-2\alpha} \|\xi-\bar\xi\|
         \dint_s^{+\infty} b(r,s) \ \Lip(f_r) \ a(r,s) \dr \\
      & \le \dfrac{2\beta}{1-2\alpha} \|\xi - \bar\xi\|
   \end{align*}
   Since $\alpha + \beta < 1/2$ we have
      $$ \|(\Phi \phi)(s,\xi) - (\Phi \phi)(s,\bar\xi)\|
         \le \|\xi - \bar\xi\|.$$
   Therefore $\Phi(\cX)\subset\cX$.

   We now show that $\Phi$ is a contraction in $\cX$. Given $\phi,\psi\in\cX$
   and $s \in \R_0^+$, let $x^{\phi}$ and $x^{\psi}$ be the unique sequences given
   by Lemma~\ref{lemma:Exist-Suc-x_m} respectively for $\phi$ and $\psi$.
   By~\ref{eq:splitbounds2},~\eqref{ineq:norm:f_k(x^phi_k...)- f_k(x^psi_k...)},~\eqref{ineq:d_n(x^phi-x^psi)<=d(phi,psi)}
   and~\eqref{eq:lbd} we have
   \begin{align*}
      & \|(\Phi \phi)(s,\xi)-(\Phi \psi)(s,\xi)\| \\
      & \le \dint_s^{+\infty} \|(T_{r,s}|_{F_s})^{-1} Q_r\|
         \|f(r,x^\phi(r,\xi),\phi(r,x^\phi(r,\xi)))
         - f(r,x^\psi(r,\xi),\psi(r,x^\psi(r,\xi)))\| \dr\\
      & \le \dint_s^{+\infty} b(r,s) \Lip(f_r) a(r,s) \|\xi\|
            \prtsr{2 \norms{x^\phi-x^\psi}
            + \dfrac{1}{1-2\alpha} d(\phi,\psi)} \dr\\
      & \le \dint_s^{+\infty} b(r,s) \Lip(f_r) a(r,s) \|\xi\|
            \prtsr{\dfrac{2\alpha}{\prts{1-2\alpha}^2} + \dfrac{1}{1-2\alpha}} d(\phi,\psi) \dr\\
      & \le \dfrac{1}{\prts{1-2\alpha}^2} \|\xi\| d(\phi,\psi)
         \dint_s^{+\infty} b(r,s) a(r,s) \Lip(f_r) \dr\\
      & \le \dfrac{\beta}{\prts{1-2\alpha}^2}
         \|\xi\| d(\phi,\psi)
   \end{align*}
   for every $s \in \R_0^+$ and every $\xi \in E_s$ and this implies
      $$ d(\Phi \phi, \Phi \psi)
         \le \dfrac{\beta}{\prts{1-2\alpha}^2}  d(\phi,\psi)$$
   Since $\dfrac{\beta}{\prts{1-2\alpha}^2} < 1$ it follows that $\Phi$ is a contraction in $\cX$. Therefore the map $\Phi$ has a unique fixed point $\phi$ in~$\cX$ that is the desired function.
\end{proof}

We are now in conditions to prove Theorem~\ref{thm:global}.

\begin{proof}[Proof of Theorem~$\ref{thm:global}$]
   By Lemma~\ref{lemma:Exist-Suc-x_m}, for each $\phi\in \cX$ there is
   a unique sequence $x^{\phi}\in\cB_s$ satisfying~\eqref{eq:dyn-split2a}. It
   remains to show that there is a $\phi$ and a corresponding $x^\phi$ that
   satisfies~\eqref{eq:dyn-split2b}. By Lemma~\ref{lemma:equiv}, this is
   equivalent to show that there is $\phi \in \cX$ and the corresponding $x^\phi \in \cB_s$ that satisfies~\eqref{eq:phi_n}. Finally, by
   Lemma~\ref{lemma:Exist-Suc-phi}, there is a unique solution of
   \eqref{eq:phi_n}. This establishes the existence of the invariant manifolds for $\delta>0$ sufficiently small. Moreover, for each $s \in \R_0^+$,
   $t \ge s$ and
   $\xi,\bar{\xi}\in E_s$ it follows from~\eqref{cond-phi-1} and~\eqref{ineq:x^phi_m(xi)- x^phi_m(barxi)<=...} that
   \begin{align*}
      & \|\Psi_{t-s}(s,\xi,\phi(s,\xi)) -
      \Psi_{t-s}(s,\bar\xi,\phi(s,\bar\xi))\| \\
      & \le \|x^\phi (t,\xi) - x^\phi (t,\bar\xi)\|
         + \|\phi(t,x^\phi (t,\xi)) - \phi(t,x^\phi (t,\bar\xi))\|\\
      & \le 2 \|x^\phi (t,\xi) - x^\phi (t,\bar\xi)\|\\
      & \le \dfrac{2}{1-2\alpha} a(t,s) \|\xi-\bar \xi\|.
   \end{align*}
   Hence we obtain~\eqref{eq:bound-thm1} and the theorem is proved.
\end{proof}
%-----------------------------------------------------------------------------%
\section{Proof of Theorem~\ref{thm:local}}
%-----------------------------------------------------------------------------%
We will now prove Theorem~\ref{thm:local}. Let $\tilde{f} \colon \R_0^+ \times X \to X$ the function defined by
   $$ \tilde{f}(t,x) =
      \begin{cases}
         f(r,x) & \text{if} \quad x \in B(R(r)) \\
         f\left(r, x R(r) / \|x\| \right) & \text{if} \quad x \notin B(R(r)).
      \end{cases}$$
Clearly, $\tilde f$ is a continuous function and, since $f_r|_{B(R(r))} \colon B(R(r)) \to X$ is a Lipschitz function for each $r \in \R_0^+$, it is easy to see that, for every $r \in \R_0^+$, the function $\tilde f_r \colon X \to X$, given by $\tilde{f}_r(x) = \tilde{f}(r,x)$, is
Lipschitz and  $\Lip(\tilde{f}_r) \le 2\Lip(f_r|_{B(R(r))})$. Thus, we have
   $$ \tilde{\alpha}
      = \sup_{(t,s) \in (\R_0^+)^2_>} \dfrac{1}{a(t,s)}
         \int_s^t a(t,r) a(r,s) \Lip(\tilde{f_r}) \dr
      \le 2 \alpha
      < + \infty,$$
   $$ \tilde{\beta}
      = \sup_{s \in \R_0^+}
         \dint_s^{+\infty} b(r,s) a(r,s) \Lip(\tilde{f_r}) \dr
         \le 2 \beta
         < + \infty$$
and
   $$ 2\tilde{\alpha} + \max\set{2\tilde{\beta}, \sqrt{\tilde{\beta}}}
      \le 4 \alpha + \max\set{4 \beta, \sqrt{2\beta}}
      < 1.$$
Hence, if $\tilde{\Psi}_\tau$ is the semiflow given by~\eqref{def:Psi} and corresponding to equation~\eqref{eq:ivp-nonli} with the perturbation $f$ replaced by $\tilde{f}$, then by Theorem~\ref{thm:global} we have that~\eqref{eq:invariance} holds for $\tilde{\Psi}_\tau$ and
\begin{equation}\label{eq:local:asy}
   \begin{split}
      \| \tilde\Psi_{t-s}(s,\xi,\phi(s,\xi))
      - \tilde\Psi_{t-s}(s,\bar\xi,\phi(s,\bar\xi))\|
      & \le \frac{2}{1-2\tilde\alpha} a(t,s) \|\xi - \bar\xi\|\\
      & \le \frac{2}{1-4 \alpha} a(t,s) \|\xi - \bar\xi\|\\
   \end{split}
\end{equation}
for every $(t,s) \in (R_0^+)^2_\ge$ and every $\xi, \bar\xi \in E_s$. In particular, if $\xi \in B(R(s)/(2S(s))) \cap E_s$ and $\bar\xi=0$, by~\eqref{eq:local:asy} we have
$(\xi,\phi(s,\xi)) \in B(R(s)/S(s))$ and from~\eqref{ine:sn-local} we have
   $$ \| \tilde\Psi_{t-s}(s,\xi,\phi(s,\xi))\|
      \le \frac{2}{1-4 \alpha} \, a(t,s) \|\xi\|
      < \frac{2}{1-4\alpha} \, a(t,s) \, \frac{R(s)}{S(s)}
      \le R(t)$$
and this implies
\begin{equation}\label{eq:local:inv}
   \tilde\Psi_\tau(\cV^*_{\phi,R/(2S)})
   \subseteq \cV^*_{\phi,R} \
   \text{for every } \tau \ge 0.
\end{equation}
where $\cV_\phi$ in~\eqref{eq:manifold-local} corresponds to the manifolds obtained by Theorem \ref{thm:global} for the perturbation $\tilde f$. Since $\tilde f_r|_{B(R(r))} = f_r|_{B(R(r))}$, from~\eqref{eq:local:inv} it follows~\eqref{thm:local:invar-local} and from~\eqref{eq:local:asy} we get~\eqref{thm:ineq:norm:F_mn(xi...)-F_mn(barxi...)-local}. This finishes the proof of Theorem~\ref{thm:local}.
%-----------------------------------------------------------------------------%
\section*{Acknowledgments}
   This work was partially supported by FCT though Centro de Ma\-te\-m\'a\-ti\-ca da Universidade da Beira Interior (project PEst-OE/MAT/UI0212/2011).
%-----------------------------------------------------------------------------%
\bibliographystyle{elsart-num-sort}

\begin{thebibliography}{10}
\expandafter\ifx\csname url\endcsname\relax
  \def\url#1{\texttt{#1}}\fi
\expandafter\ifx\csname urlprefix\endcsname\relax\def\urlprefix{URL }\fi

\bibitem{Barreira-Valls-JDE-2006-221}
L.~Barreira, C.~Valls, Stable manifolds for nonautonomous equations without
  exponential dichotomy, J. Differential Equations 221~(1) (2006) 58--90.
\newline\urlprefix\url{http://dx.doi.org/10.1016/j.jde.2005.04.005}

\bibitem{Barreira-Valls-DCDS-A-2008-21-4}
L.~Barreira, C.~Valls, Characterization of stable manifolds for nonuniform
  exponential dichotomies, Discrete Contin. Dyn. Syst. 21~(4) (2008)
  1025--1046.
\newline\urlprefix\url{http://dx.doi.org/10.3934/dcds.2008.21.1025}

\bibitem{Barreira-Valls-LNM-2008}
L.~Barreira, C.~Valls, Stability of nonautonomous differential equations, vol.
  1926 of Lecture Notes in Mathematics, Springer, Berlin, 2008.
\newline\urlprefix\url{http://dx.doi.org/10.1007/978-3-540-74775-8}

\bibitem{Barreira-Valls-NATMA-2009}
L.~Barreira, C.~Valls, Polynomial growth rates, Nonlinear Anal. 71~(11) (2009)
  5208--5219.
\newline\urlprefix\url{http://dx.doi.org/10.1016/j.na.2009.04.005}

\bibitem{Barreira-Valls-JFA-2009-257-(1018-1029)}
L.~Barreira, C.~Valls, Stable invariant manifolds for parabolic dynamics, J.
  Funct. Anal. 257~(4) (2009) 1018--1029.
\newline\urlprefix\url{http://dx.doi.org/10.1016/j.jfa.2009.01.014}

\bibitem{Barreira-Valls-NATMA-2010-72-(2444-2456)}
L.~Barreira, C.~Valls, Smooth stable invariant manifolds and arbitrary growth
  rates, Nonlinear Anal. 72~(5) (2010) 2444--2456.
\newline\urlprefix\url{http://dx.doi.org/10.1016/j.na.2009.10.041}

\bibitem{Bento-Silva-JFA-2009}
A.~J.~G. Bento, C.~Silva, Stable manifolds for nonuniform polynomial
  dichotomies, J. Funct. Anal. 257~(1) (2009) 122--148.
\newline\urlprefix\url{http://dx.doi.org/10.1016/j.jfa.2009.01.032}

\bibitem{Bento-Silva-JDDE}
A.~J.~G. Bento, C.~M. Silva, Generalized nonuniform dichotomies and local
  stable manifolds, J. Dynam. Differential Equations, to appear, Preprint
  arXiv:1007.5039 [math.DS].
\newline\urlprefix\url{http://arxiv.org/abs/1007.5039}

\bibitem{Bento-Silva-2012-arXiv1209.6589B}
A.~J.~G. {Bento}, C.~M. {Silva}, {Nonuniform dichotomic behavior: Lipschitz
  invariant manifolds for difference equations},\ Preprint arXiv:1209.6589
  [math.DS],\ submitted.
\newline\urlprefix\url{http://arxiv.org/abs/1209.6589}

\bibitem{Bento-Silva-QJM-2012}
A.~J.~G. Bento, C.~M. Silva, Stable manifolds for non-autonomous equations with
  non-uniform polynomial dichotomies, Q. J. Math. 63~(2) (2012) 275--308.
\newline\urlprefix\url{http://dx.doi.org/10.1093/qmath/haq047}

\bibitem{Hadamard-1901}
J.~Hadamard, Sur l'it\'eration et les solutions asymptotiques des \'equations
  diff\'erentielles, Bull. Soc. Math. France 29 (1901) 224--228.

\bibitem{Mane-LNM-1983}
R.~Ma{\~n}{\'e}, Lyapounov exponents and stable manifolds for compact
  transformations, in: Geometric dynamics ({R}io de {J}aneiro, 1981), vol. 1007
  of Lecture Notes in Math., Springer, Berlin, 1983, pp. 522--577.
\newline\urlprefix\url{http://dx.doi.org/10.1007/BFb0061433}

\bibitem{Megan-Sasu-Sasu-IEOT-2002}
M.~Megan, B.~Sasu, A.~L. Sasu, On nonuniform exponential dichotomy of evolution
  operators in {B}anach spaces, Integral Equations Operator Theory 44~(1)
  (2002) 71--78.
\newline\urlprefix\url{http://dx.doi.org/10.1007/BF01197861}

\bibitem{Naulin-Pinto-NATMA-1994}
R.~Naulin, M.~Pinto, Dichotomies and asymptotic solutions of nonlinear
  differential systems, Nonlinear Anal. 23~(7) (1994) 871--882.
\newline\urlprefix\url{http://dx.doi.org/10.1016/0362-546X(94)90125-2}

\bibitem{Perron-MZ-1929}
O.~Perron, \"{U}ber {S}tabilit\"at und asymptotisches {V}erhalten der
  {I}ntegrale von {D}ifferentialgleichungssystemen, Math. Z. 29~(1) (1929)
  129--160.
\newline\urlprefix\url{http://dx.doi.org/10.1007/BF01180524}

\bibitem{Perron-JRAM-1929}
O.~Perron, {\"U}ber stabilit\"at und asymptotisches verhalten der l\"osungen
  eines systems endlicher differenzengleichungen, J. Reine Angew. Math. 161
  (1929) 41--64.
\newline\urlprefix\url{http://dx.doi.org/10.1515/crll.1929.161.41}

\bibitem{Perron-MZ-1930}
O.~Perron, Die {S}tabilit\"atsfrage bei {D}ifferentialgleichungen, Math. Z.
  32~(1) (1930) 703--728.
\newline\urlprefix\url{http://dx.doi.org/10.1007/BF01194662}

\bibitem{Pesin-IANSSSR-1976}
Y.~Pesin, Families of invariant manifolds that corresponding to nonzero
  characteristic exponents, Izv. Akad. Nauk SSSR Ser. Mat. 40~(6) (1976)
  1332--1379, (Russian) {E}nglish transl. Math. USSR-Izv. 10 (1976),
  1261--1305.
\newline\urlprefix\url{http://dx.doi.org/10.1070/IM1976v010n06ABEH001835}

\bibitem{Pesin-UMN-1977}
Y.~Pesin, Characteristic {L}japunov exponents, and smooth ergodic theory,
  Uspehi Mat. Nauk 32~(4) (1977) 55--112, (Russian) {E}nglish transl. Russ.
  Math. Surv. 32 (1977) 55-114.
\newline\urlprefix\url{http://dx.doi.org/10.1070/RM1977v032n04ABEH001639}

\bibitem{Pesin-IANSSSR-1977}
Y.~Pesin, Geodesic flows in closed {R}iemannian manifolds without focal points,
  Izv. Akad. Nauk SSSR Ser. Mat. 41~(6) (1977) 1252--1288, (Russian) {E}nglish
  transl. Math. USSR-Izv. 11 (1977) 1195--1228.
\newline\urlprefix\url{http://dx.doi.org/doi:10.1070/IM1977v011n06ABEH001766}

\bibitem{Pinto-DEDS-1994}
M.~Pinto, Conditional stability for piecewise continuous solutions of
  differential equations, Differential Equations Dynam. Systems 2~(3) (1994)
  217--230.

\bibitem{Poincare-1886}
H.~Poincar\'e, Sur les courbes d\'efinies par les \'equations
  diff\'erentielles, J. Math. Pures Appl. 4 (1886) 151--211.

\bibitem{Potzsche-LNM-2010}
C.~P{\"o}tzsche, Geometric theory of discrete nonautonomous dynamical systems,
  vol. 2002 of Lecture Notes in Mathematics, Springer-Verlag, Berlin, 2010.
\newline\urlprefix\url{http://dx.doi.org/10.1007/978-3-642-14258-1}

\bibitem{Preda-Megan-BAusMS-1983}
P.~Preda, M.~Megan, Nonuniform dichotomy of evolutionary processes in {B}anach
  spaces, Bull. Austral. Math. Soc. 27~(1) (1983) 31--52.
\newline\urlprefix\url{http://dx.doi.org/10.1017/S0004972700011473}

\bibitem{Ruelle-AM-1982}
D.~Ruelle, Characteristic exponents and invariant manifolds in {H}ilbert space,
  Ann. of Math. (2) 115~(2) (1982) 243--290.
\newline\urlprefix\url{http://dx.doi.org/10.2307/1971392}

\end{thebibliography}
%-----------------------------------------------------------------------------%
\def\cprime{$'$}

%-----------------------------------------------------------------------------%
\end{document}